\definecolor{lavender}{rgb}{0.4,0,1.0}
\crefname{conjecture}{Conjecture}{Conjectures}
\newtheorem{theorem}{Theorem}[section]
\newtheorem{corollary}[theorem]{Corollary}
\newtheorem{question}[theorem]{Question}
\newtheorem{lemma}[theorem]{Lemma}
\theoremstyle{definition}
\newtheorem{remark}[theorem]{Remark}
\newtheorem{example}[theorem]{Example}
\definecolor{Teal}{rgb}{0,0.784,0.784}
\definecolor{Purple}{rgb}{0.847,0.6,1}
\definecolor{Yellow}{rgb}{0.9,0.9,0.5}
\newcommand{\includeSymbol}[1]{\ensuremath{%
	\mathchoice
		{\raisebox{-.7mm}{\includegraphics[height=2.2ex]{#1}}}	
		{\raisebox{-.7mm}{\includegraphics[height=2.2ex]{#1}}}
		{\raisebox{-.6mm}{\includegraphics[height=1.6ex]{#1}}}
		{\raisebox{-.5mm}{\includegraphics[height=1ex]{#1}}}
}}
\newcommand{\Reflect}{\includeSymbol{Reflect}}
\newcommand{\Refract}{\includeSymbol{Refract}}
\newcommand{\cyc}{\mathrm{cyc}}
\newcommand{\BB}{\mathbb{B}}
\newcommand{\affS}{\widetilde{\mathfrak{S}}}
\newcommand{\HHH}{\mathbf{H}}
\newcommand{\TT}{\mathbb{T}}
\newcommand{\HH}{\mathrm{H}}
\newcommand{\PP}{\mathrm{P}}
\newcommand{\UU}{\mathrm{U}}
\newcommand{\vv}{\mathbf{v}}
\newcommand{\SD}{\mathrm{SD}}
\newcommand{\SYT}{\mathrm{SYT}}
\newcommand{\maj}{\mathrm{maj}}
\newcommand{\ff}{f}
\newcommand{\FF}{F}
\newcommand{\FFF}{\mathbf{F}}
\newcommand{\bb}{\mathfrak{b}}
\newcommand{\s}{\widetilde{s}}
\newcommand{\quot}{\mathbf{q}}
\newcommand{\TPro}{\mathrm{TPro}}
\newcommand{\Z}{\mathbb{Z}}
\newcommand{\XX}{\Xi}
\newcommand{\Cycle}{\mathsf{Cycle}}
\newcommand{\Eflect}{{\color{red}E_{\Reflect}}}
\newcommand{\Efract}{{\color{Teal}E_{\Refract}}}
\newcommand{\Wflect}{{\color{red}\mathcal{W}_{\Reflect}}}
\newcommand{\Wfract}{{\color{Teal}\mathcal{W}_{\Refract}}}
\newcommand{\WW}{\mathcal{W}}
\newcommand{\dfn}[1]{\textcolor{blue}{\emph{#1}}}
\begin{document}

\title[]{Toric Promotion with Reflections and Refractions}
\subjclass[2010]{}

\author[]{Ashleigh Adams}
\address[]{Department of Mathematics, North Dakota State University, Fargo, ND 58102, USA}
\email{ashleigh.adams@ndsu.com}

\author[]{Colin Defant}
\address[]{Department of Mathematics, Harvard University, Cambridge, MA 02138, USA}
\email{colindefant@gmail.com}

\author[]{Jessica Striker}
\address[]{Department of Mathematics, North Dakota State University, Fargo, ND 58102, USA}
\email{jessica.striker@ndsu.com}

\maketitle

\begin{abstract}
Inspired by recent work on refraction billiards in dynamics, we introduce a notion of refraction for combinatorial billiards. This allows us to define a generalization of toric promotion that we call \emph{toric promotion with reflections and refractions}, which is a dynamical system defined via a graph $G$ whose edges are partitioned into a set of \emph{reflection edges} and a set of \emph{refraction edges}. This system is a discretization of a billiards system in which a beam of light can pass through, reflect off of, or refract through each toric hyperplane in a toric arrangement. Generalizing the main theorem known about toric promotion, we give a simple formula for the orbit structure of toric promotion with reflections and refractions when $G$ is a forest. We also completely describe the orbit sizes when $G$ is a cycle with an even number of refraction edges; this result is new even for ordinary toric promotion (i.e., when there are no refraction edges). When $G$ is a cycle of even size with no reflection edges, we obtain an interesting instance of the cyclic sieving phenomenon. 
\end{abstract} 

\section{Introduction}\label{sec:intro}

\subsection{Combinatorial Billiards}

The field of \emph{dynamical algebraic combinatorics} concerns objects of interest in combinatorics, drawing inspiration from classical dynamical systems for guidance on the relevant questions and themes worth investigating. Most of the work in this area has focused on dynamical systems obtained via the iteration of certain combinatorially defined operators; two of the most prominent such operators are \emph{promotion} and \emph{rowmotion} \cite{BSV,Semidistrim,DHPP,Haiman,HopkinsRubey,ProppRoby,Rhoades,Schutzenberger1,Schutzenberger2,StanleyPromotion,StrikerSurvey,StrikerWilliams,ThomasWilliams}. A new subfield of dynamical algebraic combinatorics, which we call \emph{combinatorial billiards}, draws inspiration from a different part of classical dynamics: mathematical billiards. Roughly speaking, combinatorial billiards studies discretized versions of billiards systems that are much more rigid than usual billiards systems. The articles \cite{DefantJiradilok,Zhu} prove extremal combinatorial results about billiards systems that take place inside of polygons in the equilateral triangular grid; such billiards systems can also be interpreted in terms of trip permutations of certain plabic graphs. The article \cite{Barkley} relates combinatorial billiards systems to tiling enumeration problems. The article \cite{BDHKL} studies Coxeter-theoretic \emph{Bender--Knuth billiards systems}, in which a discretized beam of light bounces around in an arrangement of transparent windows and one-way mirrors. 

\begin{figure}[ht]
  \begin{center}
  \includegraphics[height=5.1cm]{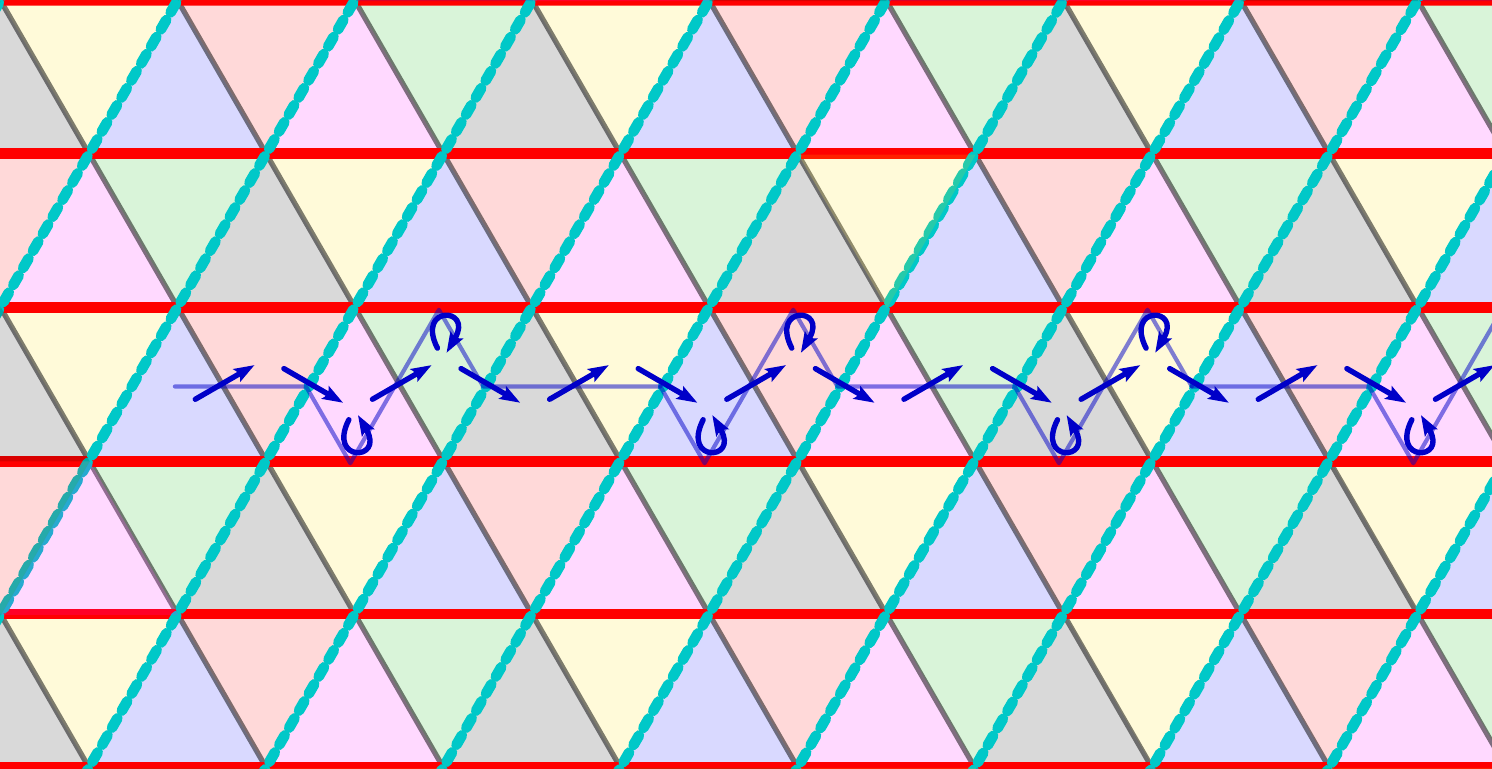}
  \quad 
  \includegraphics[height=5.125cm]{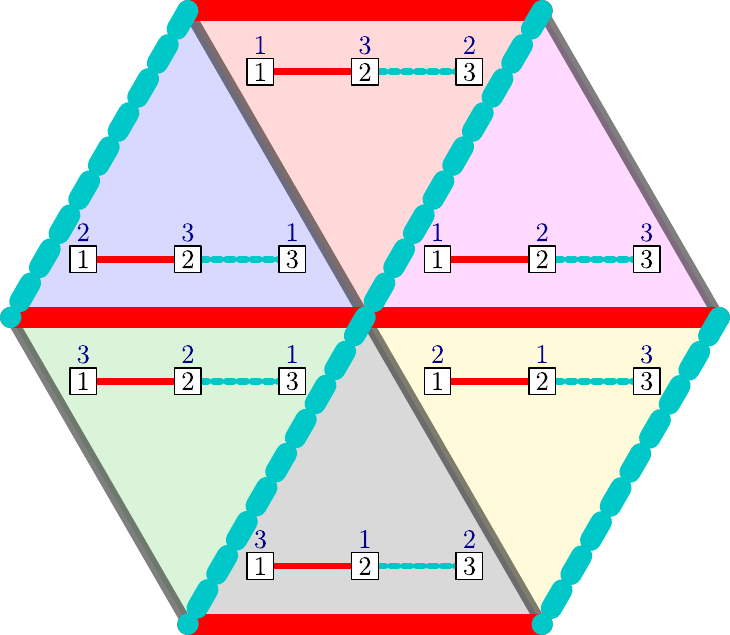} \\ 
  \vspace{0.5cm}
  \includegraphics[width=\linewidth]{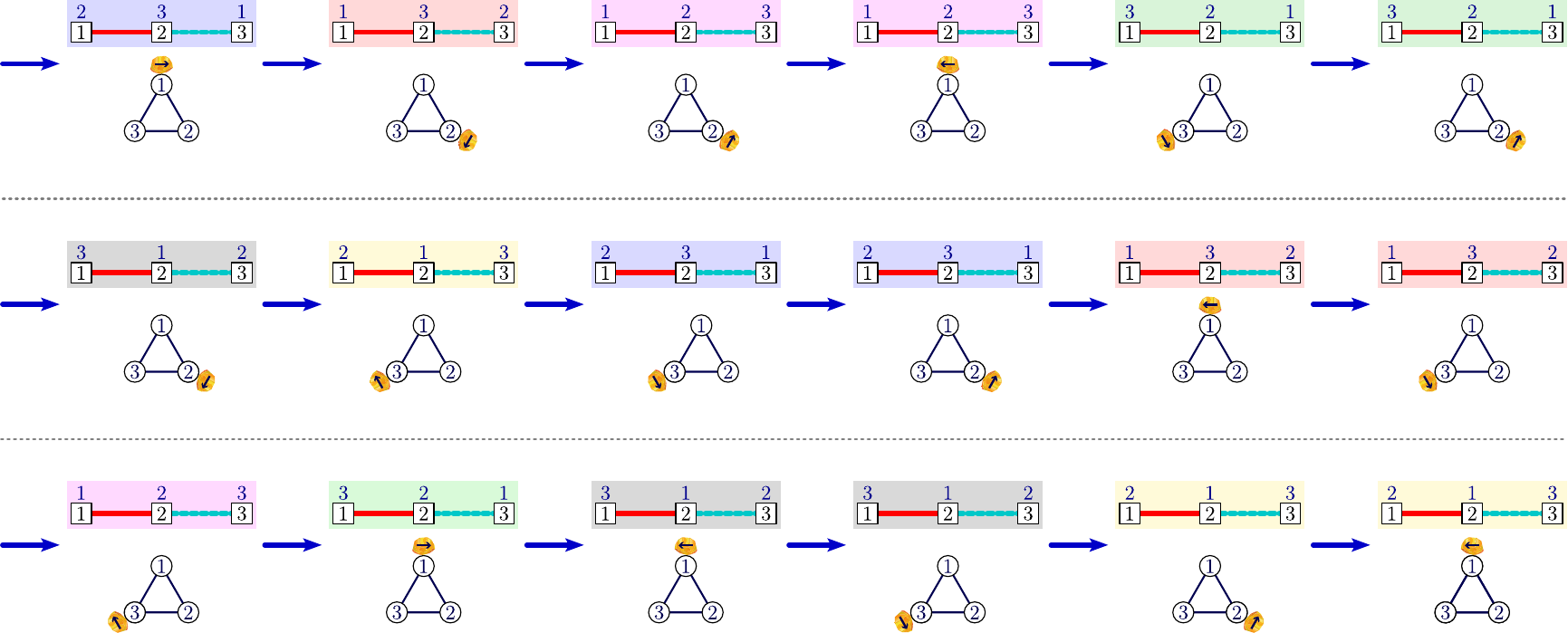}
  \end{center}
\caption{The image in the upper left shows an orbit of a combinatorial billiards system with reflections and refractions in the equilateral triangular grid. Each of the lines is a {\color{gray}grey} \emph{window}, a {\color{red}red} \emph{mirror}, or a {\color{Teal}teal} \emph{metalens}. Unit triangles correspond to elements of the affine symmetric group $\affS_3$. Quotienting by the action of the coroot lattice, we obtain the toric arrangement shown in the upper right. Toric regions of this toric arrangement correspond to permutations in the symmetric group $\mathfrak S_3$. Projecting the orbit of the combinatorial billiards system to the torus yields a size-$18$ orbit of toric promotion with reflections and refractions, which is shown in the bottom image. }\label{fig:start}
\end{figure}

In \cite{DefantToric}, Defant introduced a new combinatorial dynamical system called \emph{toric promotion}, which acts on labelings of a graph and serves as a cyclic analogue of Sch\"utzenberger's famous promotion operator. The original work on toric promotion was purely combinatorial. In this article, we give a geometric description of toric promotion, thereby placing it into the realm of combinatorial billiards. 

An exciting variant of classical billiards that has come under consideration in recent years is \emph{refraction billiards}, in which a beam of light can refract (i.e., bend) instead of reflecting \cite{Baird,Barutello,Davis1,Davis2,DeBlasi2,DeBlasi1,Jay,Paris}. The primary novel idea introduced in this article is a definition of refraction in combinatorial billiards. This new framework will allow us to introduce and investigate a generalization of toric promotion that uses reflections and refractions. We were originally led to the definition of a refraction when attempting to model trip permutations of certain \emph{hourglass plabic graphs} (as introduced in \cite{GPPSS}) in the same way that triangular-grid billiards systems were used in \cite{DefantJiradilok} to model trip permutations of certain ordinary plabic graphs. 

For now, let us define toric promotion with reflections and refractions in purely combinatorial terms, postponing the geometric formulation until \cref{sec:billiards}. 

Fix an integer $n\geq 3$. For $i\in\Z/n\Z$, let $s_i=(i\,\,i+1)$ denote the transposition that swaps $i$ and $i+1$ (in particular, $s_n$ swaps $n$ and $1$). Let $\Cycle_n$ denote the cycle graph with vertex set $\Z/n\Z$ and with edge set $\{\{i,i+1\}:i\in\Z/n\Z\}$. Fix an embedding of $\Cycle_n$ in the plane so that the vertices are arranged in the clockwise cyclic order $1,2,\ldots,n$. 

Let $G=(V,E)$ be a simple graph with $n\geq 3$ vertices. A \dfn{labeling} of $G$ is a bijection $V\to\Z/n\Z$. Let $\Lambda_G$ denote the set of labelings of $G$. Let $E=\Eflect\sqcup\Efract$ be a partition of the edge set of $G$ into two sets $\Eflect$ and $\Efract$. Elements of $\Eflect$ are called \dfn{reflection edges}, while elements of $\Efract$ are called \dfn{refraction edges}.\footnote{We suggest pronouncing the symbols $\Eflect$ and $\Efract$ as ``E-flect'' and ``E-fract,'' respectively.} 

Let $\XX_G=\Lambda_G\times\Z/n\Z\times\{\pm 1\}$. Define a map $\Theta\colon\XX_G\to\XX_G$ by 
\begin{equation}\label{eq:Theta}
\Theta(\sigma,i,\epsilon)=\begin{cases} (s_i\circ\sigma,i+\epsilon,\epsilon) & \mbox{if }\{\sigma^{-1}(i),\sigma^{-1}(i+1)\}\not\in E; \\   (\sigma,i+\epsilon,\epsilon) & \mbox{if }\{\sigma^{-1}(i),\sigma^{-1}(i+1)\}\in \Eflect; \\   (s_i\circ\sigma,i-\epsilon,-\epsilon) & \mbox{if }\{\sigma^{-1}(i),\sigma^{-1}(i+1)\}\in \Efract.  \end{cases}
\end{equation} 
The dynamical system on $\XX_G$ determined by $\Theta$, which we call \dfn{toric promotion with reflections and refractions}, is our central object of study. 

We imagine that the elements of $\{\pm 1\}$ represent the two cyclic orientations of $\Cycle_n$: $1$ represents clockwise, while $-1$ represents counterclockwise. Thus, a triple $(\sigma,i,\epsilon)\in\XX_G$ encodes a labeling of $G$, a vertex of $\Cycle_n$, and a cyclic orientation of $\Cycle_n$. It is helpful to imagine that there is a stone placed on the vertex $i+\frac{1}{2}(1-\epsilon)$ of $\Cycle_n$ and that the stone is pointing toward $i+\frac{1}{2}(1+\epsilon)$. When we apply $\Theta$ to $(\sigma,i,\epsilon)$, there are three possibilities for what can happen. If $\sigma$ assigns the labels $i$ and $i+1$ to nonadjacent vertices of $G$, then we swap the labels $i$ and $i+1$ and move the stone one step in the direction it is pointing. If $\sigma$ assigns the labels $i$ and $i+1$ to the vertices of a reflection edge, then we do nothing to the labeling and move the stone one step in the direction it is pointing. If $\sigma$ assigns the labels $i$ and $i+1$ to the vertices of a refraction edge, then we swap the labels $i$ and $i+1$ and reverse the direction of the stone. 

\begin{example}
The bottom image in \cref{fig:start} shows an orbit of $\Theta$ of size $18$, where \[G=\begin{array}{l}\includegraphics[height=0.353cm]{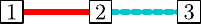}\end{array}\] is the path graph with vertex set $\{1,2,3\}$ with a single reflection edge $\{1,2\}$ and a single refraction edge $\{2,3\}$. We draw a triple $(\sigma,i,\epsilon)$ with the labeling $\sigma$ above a diagram representing the pair $(i,\epsilon)$. For example, 
\[\begin{array}{l}\includegraphics[height=2.5cm]{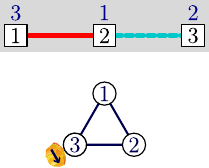}\end{array}\]
represents the triple $(\sigma,2,-1)$, where $\sigma(1)=3$, $\sigma(2)=1$, and $\sigma(3)=2$. 
We will explain the geometric parts of \cref{fig:start} in \cref{sec:affine_S_n,sec:billiards}. 
\end{example} 

The toric promotion operator $\TPro\colon\Lambda_G\to\Lambda_G$ introduced in \cite{DefantToric} appears in our setting when there are no refraction edges. More precisely, if $\Efract=\varnothing$ (so $E=\Eflect$), then $\TPro(\sigma)$ is the labeling (i.e., the first entry) in the triple $\Theta^n(\sigma,1,1)$. 

As we will discuss in \cref{sec:billiards}, iteration of $\Theta$ can be interpreted as a discretized billiards system in which a beam of light travels in a torus, sometimes reflecting or refracting when it hits the walls of a toric hyperplane arrangement. In a triple $(\sigma,i,\epsilon)\in\XX_G$, the labeling $\sigma$ encodes the location of the beam of light, while the index $i$ and the orientation $\epsilon$ together encode the direction that the beam of light is facing. A size-$k$ orbit of $\Theta$ corresponds to a periodic discrete billiards trajectory of period~$k$. Thus, our main theorems concern the orbit structure of $\Theta$. For $i\in\Z/n\Z$ and $\epsilon\in\{\pm 1\}$, the map $\boldsymbol{\omega}_{i,\epsilon}\colon\Z/n\Z\to\Z/n\Z$ defined by $\boldsymbol{\omega}_{i,\epsilon}(j)=\epsilon(j-i)+1$ is an automorphism of $\Cycle_n$, and the orbit of $\Theta$ containing $(\sigma,i,\epsilon)$ has the same size as the orbit of $\Theta$ containing $(\boldsymbol{\omega}_{i,\epsilon}\circ\sigma,1,1)$. Therefore, when aiming to compute the size of the orbit of $\Theta$ containing $(\sigma,i,\epsilon)$, we may assume without loss of generality that $i=1$ and $\epsilon=1$.

\subsection{Further Motivation}\label{subsec:remarks}

We believe that the connection we draw between dynamical algebraic combinatorics and mathematical billiards will be of interest to researchers in both fields. On the one hand, there are several fascinating refraction billiards systems studied in the articles \cite{Baird,Barutello,Davis1,Davis2,DeBlasi2,DeBlasi1,Jay,Paris}. As illustrated by our main results, the types of statements that one can prove about combinatorial billiards are much more precise. Our algebraic/combinatorial framework also allows us to work in high-dimensional spaces; this is in contrast to the aformentioned articles, which often work in $2$-dimensional regimes. It seems potentially fruitful to ``discretize'' some of the systems from those articles in order to obtain more precise statements and to generalize them to higher dimensions. On the other hand, our model provides several new examples of interesting combinatorial dynamical systems for researchers in the combinatorics community to explore. We expect that there are other graphs beyond forests and cycles where one could prove interesting results about toric promotion with reflections and refractions. 

In addition to the aforementioned connection between combinatorics and dynamics, there is also an interesting topological interpretation of our work, suggested to us by Pavel Galashin. Given an $n$-vertex graph $G=(V,E)$ and a partition $E=\Eflect\sqcup\Efract$, one can consider an orbit of $\Theta$ as a periodic billiards trajectory in an $(n-1)$-dimensional torus. By tracing out one period of this billiards trajectory, one obtains a closed loop in the torus. We believe it would be very interesting to understand these loops topologically. In particular, one can ask when they are contractible (see \cref{quest:contractible}). 

Another advantage of our geometric perspective in terms of billiards is that it naturally leads to other combinatorial dynamical systems that we have not yet attempted to study but that we believe should yield interesting properties. We will see in \cref{sec:billiards} that our billiards system is obtained by shining a beam of light in the direction of a very specific vector in the type-$A_{n-1}$ coroot lattice. By choosing a different coroot vector as the initial direction, one would obtain a different combinatorial dynamical system with its own potentially-interesting properties. As mentioned in \cref{subsec:Weyl}, one could obtain even more systems by considering other affine Weyl groups.

\subsection{Forests}

The main result in Defant's original article about toric promotion (see \cite[Theorem~1.3]{DefantToric}) gives an exact description of the orbit structure of toric promotion when the underlying graph $G$ is a forest (i.e., a graph with no cycles) with no refraction edges. Our first main theorem vastly generalizes this result by allowing an arbitrary mixture of reflection edges and refraction edges. In order to state it, we need a bit more terminology. 

Let $G=(V,E)$ be a forest, and let $E=\Eflect\sqcup\Efract$ be a partition of its edge set. Let $T=(V_T,E_T)$ be a connected component of $G$. We can partition the vertex set of $T$ into two (disjoint) subsets $X_1$ and $X_{-1}$ so that 
\begin{itemize}
\item every edge in $\Efract\cap E_T$ has one endpoint in $X_1$ and one endpoint in $X_{-1}$; 
\item every edge in $\Eflect\cap E_T$ either has both its endpoints in $X_1$ or has both its endpoints in $X_{-1}$.
\end{itemize}
This partition is unique up to swapping the roles of $X_1$ and $X_{-1}$, so we may define \[\chi(T;\Eflect,\Efract)=||X_1|-|X_{-1}||.\]  

\begin{theorem}\label{thm:forest}
Let $G=(V,E)$ be a forest with $n\geq 3$ vertices, and fix a partition $E=\Eflect\sqcup\Efract$ of $E$ into a set of reflection edges and a set of refraction edges. Let $(\sigma,i,\epsilon)\in\XX_G$. Let $T=(V_T,E_T)$ be the connected component of $G$ containing the vertex $\sigma^{-1}(i+\frac{1}{2}(1-\epsilon))$. The orbit of $\Theta$ containing $(\sigma,i,\epsilon)$ has size \[\frac{|V_T|n(n-1)}{\gcd(n,\chi(T;\Eflect,\Efract))}.\]
\end{theorem}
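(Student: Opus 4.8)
The plan is to normalize the starting state, extract a combinatorial invariant from $T$, and then count the orbit by a ``follow the stone'' argument.

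First, using the automorphisms $\boldsymbol\omega_{i,\epsilon}$ recalled above, reduce to $(i,\epsilon)=(1,1)$, so that $T$ is the connected component of $G$ containing $r:=\sigma^{-1}(1)$. Write $\Theta^k(\sigma,1,1)=(\sigma_k,i_k,\epsilon_k)$, let $p_k:=i_k+\tfrac12(1-\epsilon_k)\in\{i_k,i_k+1\}$ be the vertex of $\Cycle_n$ holding the stone, and let $v_k:=\sigma_k^{-1}(p_k)$ be the \emph{active vertex} (the vertex of $G$ whose label equals the stone's position). A line-by-line check of \eqref{eq:Theta} shows that $v_{k+1}=v_k$ on steps not traversing an edge of $G$, and that $v_{k+1}$ is the $G$-neighbor of $v_k$ across the traversed edge otherwise. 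Since exactly one of the two labels $i_k,i_k+1$ consulted at step $k$ lies on $v_k$, the only edges of $G$ ever relevant are edges incident to the active vertex; hence $v_k\in V_T$ for all $k$, and the edges outside $T$, their reflection/refraction types, and the labels outside $T$ never affect the evolution of $\bigl(\sigma_k|_{V_T},i_k,\epsilon_k\bigr)$. In particular we may assume $G=T\sqcup\overline K_{n-|V_T|}$.

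Next, since $T$ is a tree, there is a function $\phi\colon V_T\to\{\pm1\}$, unique up to a global sign, that is constant on the endpoints of each reflection edge and opposite on the endpoints of each refraction edge; its level sets are the sets $X_1,X_{-1}$ of the statement, so $\chi(T;\Eflect,\Efract)=\bigl|\sum_{v\in V_T}\phi(v)\bigr|$. Fixing the sign so that $\phi(r)=1$, the same case analysis yields the clean invariant $\epsilon_k=\phi(v_k)$ for all $k$: the stone's orientation is determined by the active vertex. Hence $(v_k)_{k\ge 0}$ is a closed walk on the tree $T$; if $L$ is the orbit length and $t_v:=\#\{k\in[0,L):v_k=v\}$, then $L=\sum_{v\in V_T}t_v$. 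Moreover, lifting $p_k$ to $\widetilde p_k\in\Z$ — which changes by $\phi(v_k)$ on a stone-advancing step and by $0$ on a refraction step — and using that each edge of a tree is traversed equally often in each direction by a closed walk, telescoping yields a well-defined winding number $w:=(\widetilde p_L-\widetilde p_0)/n\in\Z$ with
\[ wn=\sum_{k\in[0,L)}\phi(v_k)=\sum_{v\in X_1}t_v-\sum_{v\in X_{-1}}t_v. \]
The heart of the argument is then to show $t_v=n(n-1)/\gcd\!\bigl(n,\chi(T;\Eflect,\Efract)\bigr)$ for every $v\in V_T$, which with the display gives the stated formula $L=|V_T|\cdot n(n-1)/\gcd(n,\chi(T;\Eflect,\Efract))$. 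This splits into (a) showing $t_v$ is independent of $v$ (a balance statement along the edges of $T$, where a traversal of an edge $\{u,v\}$ occurs precisely when the active vertex sits at $u$ with $v$ carrying the label $\sigma_k(u)+\phi(u)$), and (b) pinning down the winding number: in the base case $G=\overline K_n$ every step passes through, $\TPro$ is the $(n-1)$-cycle $s_ns_{n-1}\cdots s_1$ and the stone winds $n-1$ times before the orbit closes, and in general one analyzes the first return of the pair $\bigl(v_k,\sigma_k(v_k)\bigr)$ to see that the pass-through dynamics still contributes the factor $n-1$, forcing $|w|=\chi(T;\Eflect,\Efract)(n-1)/\gcd(n,\chi(T;\Eflect,\Efract))$ and hence $t=|w|n/\chi(T;\Eflect,\Efract)=n(n-1)/\gcd(n,\chi(T;\Eflect,\Efract))$; the case $\chi(T;\Eflect,\Efract)=0$ (so $w=0$) is treated directly, giving $t=n-1$.

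I expect ingredient (b) — producing the denominator $\gcd(n,\chi(T;\Eflect,\Efract))$ — to be the main obstacle, and I expect the cleanest route to it, and to (a), to be the billiards description developed in \cref{sec:billiards}: the $\Theta$-orbit lifts to a discrete billiard trajectory in the Coxeter arrangement of $\affS_n$ that advances in a fixed root direction, and because $T$ is a tree it carries no monodromy, so its reflections and refractions can be unfolded simultaneously with the coroot-lattice quotient into a single covering space on which $\Theta$ acts by a fixed translation vector. The orbit length is then the order of that translation modulo the deck group, a short computation of $\Z$-ranks in which $\chi(T;\Eflect,\Efract)$ is a coordinate of the translation vector and the gcd together with the factors $n$, $n-1$, and $|V_T|$ emerge transparently. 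All the combinatorial bookkeeping above — that the active vertex stays in $T$, that $\epsilon_k=\phi(v_k)$, and the closed-walk and winding structure — is precisely what makes this unfolding legitimate.
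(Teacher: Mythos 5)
Your setup is sound and in fact runs parallel to the paper's ``stones and coins'' framework: the observation that the active vertex (the paper's coin) never leaves $T$, the identity $\epsilon_k=\phi(v_k)$ (the paper's statement that the stone's direction is determined by whether the replica sitting on it comes from $X_1$ or $X_{-1}$), and the winding-number relation $wn=\sum_{v\in X_1}t_v-\sum_{v\in X_{-1}}t_v$ are all correct and all appear, implicitly or explicitly, in the paper's argument. But the proof has a genuine gap exactly where you flag it: neither ingredient (a) (that $t_v$ is the same for every $v\in V_T$) nor ingredient (b) (the value of the winding number, equivalently of $t$) is actually established. Note that (a) is not ``a balance statement along the edges of $T$'': balanced traversal of each edge gives that every vertex is \emph{visited} a determined number of times, but $t_v$ is the total \emph{dwell time} at $v$, so you also need to control the lengths of the excursions between consecutive edge-crossings. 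That control is precisely the content of the paper's key inductive lemma (\cref{lem:forest_key_lemma}), which shows that if the coin crosses an edge $\{v_\ell,v_{\ell'}\}$ at time $t$, it first crosses back at time $t+\eta_{\ell,\ell'}(n-1)$, that in between every ordered pair of replicas (with the first one from the far side of the edge) sits on/is pointed at by the stone exactly once, and that the resulting stone diagram is an explicit cyclic rotation of the expected one. From this one reads off both that the stone diagram first recurs up to rotation after exactly $|V_T|(n-1)$ steps and that the rotation amount is congruent to $-(|X_1|-|X_{-1}|)$ modulo $n$, which is what produces the $\gcd$.

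The substitute you propose for this lemma --- unfolding the reflections and refractions together with the coroot-lattice quotient into a covering space on which $\Theta$ acts by a fixed translation, then computing the order of that translation modulo the deck group --- is not carried out, and I do not see how to make it work. The mirrors and metalenses form a sub-arrangement of the full affine arrangement determined by $G$, and which hyperplane the trajectory hits next (and whether it is a window, mirror, or metalens) depends on the full current state, not just on a direction vector; there is no evident covering space on which the dynamics linearize, and ``no monodromy because $T$ is a tree'' does not by itself produce one. So as written the argument assumes the two quantitative facts it needs ($t_v\equiv n(n-1)/\gcd(n,\chi)$ and $|w|=\chi(n-1)/\gcd(n,\chi)$) rather than proving them; you would need to supply the recursive first-return analysis (or a genuine construction of the claimed covering space and translation) to close the proof.
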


\begin{remark}
Suppose $G$ is an $n$-vertex forest whose edges are all reflection edges. For every connected component $T$ of $G$, we have ${\chi(T;\Eflect,\Efract)=\chi(T;\Eflect,\varnothing)=|V_T|}$, so \cref{thm:forest} states that the orbit of $\Theta$ containing a triple $(\sigma,i,\epsilon)\in\Xi_G$ has size \[\frac{|V_T|n(n-1)}{\gcd(n,|V_T|)}.\] This recovers \cite[Theorem~1.3]{DefantToric}. 
\end{remark}

\begin{corollary}\label{cor:tree}
Let $G=(V,E)$ be a tree with $n\geq 3$ vertices, and fix a partition $E=\Eflect\sqcup\Efract$ of $E$ into a set of reflection edges and a set of refraction edges. All orbits of $\Theta$ have size \[\frac{n^2(n-1)}{\gcd(n,\chi(G;\Eflect,\Efract))}.\]
\end{corollary}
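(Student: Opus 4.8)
The plan is to obtain \cref{cor:tree} as an immediate specialization of \cref{thm:forest}. Since every tree is connected, the forest $G=(V,E)$ has exactly one connected component, namely $G$ itself, and this component has $|V|=n$ vertices. Consequently, for \emph{any} triple $(\sigma,i,\epsilon)\in\XX_G$, the vertex $\sigma^{-1}(i+\frac{1}{2}(1-\epsilon))$ automatically lies in this single component, so \cref{thm:forest} applies with $T=G$ and $|V_T|=n$. Substituting into the orbit-size formula of \cref{thm:forest} yields that the orbit of $\Theta$ containing $(\sigma,i,\epsilon)$ has size
\[
\frac{|V_T|\,n(n-1)}{\gcd(n,\chi(T;\Eflect,\Efract))}=\frac{n\cdot n(n-1)}{\gcd(n,\chi(G;\Eflect,\Efract))}=\frac{n^2(n-1)}{\gcd(n,\chi(G;\Eflect,\Efract))}.
\]

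Because this quantity depends only on $G$ and the partition $E=\Eflect\sqcup\Efract$ and not on the chosen triple, it follows that \emph{all} orbits of $\Theta$ have this common size, which is exactly the assertion of the corollary. The only ingredient that requires a word is that $\chi(G;\Eflect,\Efract)$ is well-defined, but this was already arranged in the discussion preceding \cref{thm:forest}: for a tree one may root $G$ at any vertex, place that vertex in $X_1$, and propagate along edges so that every refraction edge connects $X_1$ to $X_{-1}$ while every reflection edge stays within a single part; acyclicity guarantees consistency, and the resulting decomposition $V=X_1\sqcup X_{-1}$ is unique up to swapping $X_1\leftrightarrow X_{-1}$, so $||X_1|-|X_{-1}||$ is unambiguous.

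I therefore expect no real obstacle here: the corollary is essentially a one-line consequence of \cref{thm:forest} once it is observed that a tree is its own unique connected component, so that the component $T$ appearing in \cref{thm:forest} is forced to be $G$ with $|V_T|=n$ regardless of the starting triple.
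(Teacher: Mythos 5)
Your proposal is correct and matches the paper's (implicit) derivation: the corollary is stated as an immediate specialization of \cref{thm:forest}, obtained by noting that a tree is its own unique connected component, so $T=G$ and $|V_T|=n$ for every starting triple. Nothing further is needed.
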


To illustrate the preceding corollary, suppose $G$ is an $n$-vertex path graph whose edges are all refraction edges. Then \[\chi(G;\Eflect,\Efract)=\chi(G;\varnothing,E)=\begin{cases} 1 & \mbox{if }n\text{ is odd}; \\   0 & \mbox{if }n\text{ is even}. \end{cases}\] According to \cref{cor:tree}, all orbits of $\Theta$ have the same size; this size is $n^2(n-1)$ if $n$ is odd and is $n(n-1)$ if $n$ is even. 

For another illustration, suppose $G$ is an $n$-vertex star graph (i.e., a tree with a single vertex of degree $n-1$ and $n-1$ vertices of degree $1$). Then $\chi(G;\Eflect,\Efract)=|n-2|\Efract||$, so it follows from \cref{cor:tree} that all orbits of $\Theta$ have size \[\frac{n^2(n-1)}{\gcd(n,2|\Efract|)}.\]

\begin{remark}
The methods we use to prove \cref{thm:forest} are reminiscent of the those used to study \emph{tree-like factorizations} in \cite{NathanCayley}. However, that article only deals with reflections (not refractions). 
\end{remark}

\subsection{Cycles}

Assume $G=(V,E)$ is a cycle graph, and let $E=\Eflect\sqcup\Efract$ as before. Suppose in addition that $|\Efract|$ is even. Our aim is to find the size of the orbit of $\Theta$ containing a triple $(\sigma,i,\epsilon)\in\XX_G$. As mentioned above, we may assume without loss of generality that $i=1$ and $\epsilon=1$ (this is just to ease the exposition). 

Let us embed $G$ in the plane and name its vertices in the clockwise cyclic order as $v_1,v_2,\ldots,v_n$. By choosing how we embed $G$ in the plane and how we choose $v_1$, we may assume that $\sigma(v_n)=1$ and that $\sigma(v_1)<\sigma(v_{n-1})$ when we identify $\Z/n\Z$ with $[n]$ in the obvious manner. For each vertex $v_k\in V$, we consider a formal symbol $\vv_k$ that we call the \dfn{replica} of $v_k$. Let us place the replica $\vv_k$ on the vertex $\sigma(v_k)$ of $\Cycle_n$. If we start at the replica $\vv_{n-1}$, we can walk clockwise along $\Cycle_n$ until reaching $\vv_1$; let $a_0$ be $1$ more than the number of replicas other than $\vv_n$ (not including $\vv_{n-1}$ or $\vv_1$) through which we cross during this walk. Similarly, if we start at $\vv_1$, we can walk clockwise until reaching $\vv_2$; let $a_1$ be $1$ more than the number of replicas other than $\vv_n$ that we cross during this walk. Starting at $\vv_2$, we can walk clockwise until reaching $\vv_3$; let $a_2$ be $1$ more than the number of replicas other than $\vv_n$ that we cross during this walk. Repeating this process, we obtain numbers $a_0,a_1,\ldots,a_{n-2}$. We can then form an infinite sequence $(a_\beta)_{\beta\geq 0}$ by declaring that $a_{j+n-1}=a_j$ for all $j\geq 0$. It is a simple consequence of these definitions that $\sum_{\ell=0}^{n-2}a_\ell=m_\sigma(n-1)$ for some positive integer $m_\sigma$. Let $p_\sigma$ be the period of the sequence $(a_\beta)_{\beta\geq 0}$, and note that $p_\sigma$ divides $n-1$. 

Because $G$ has an even number of refraction edges, there is a unique partition $V=Y_1\sqcup Y_{-1}$ of the vertex set of $G$ such that 
\begin{itemize}
\item every refraction edge has one endpoint in $Y_1$ and one endpoint in~$Y_{-1}$; 
\item every reflection edge either has both its endpoints in $Y_1$ or has both its endpoints in~$Y_{-1}$; 
\item $v_n\in Y_1$.
\end{itemize}
Define $\mu_\sigma=|Y_1|$.

\begin{example}\label{exam:cycle1}
Assume $n=7$, and let $\sigma$ be the labeling of the cycle graph $G$ satisfying \[\sigma(v_1)=5,\quad\sigma(v_2)=6,\quad\sigma(v_3)=4,\quad\sigma(v_4)=2,\quad\sigma(v_5)=3,\quad\sigma(v_6)=7,\quad\sigma(v_7)=1.\] Note that, as stipulated above, we have $\sigma(v_n)=1$ and $\sigma(v_1)<\sigma(v_{n-1})$ (that is, $5<7$). We can represent the triple $(\sigma,1,1)$ and the graph $G$ as follows: 
\[\begin{array}{l}\includegraphics[height=3.242cm]{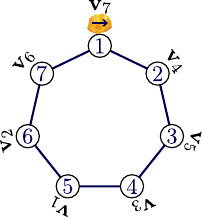}\end{array}\qquad\qquad\raisebox{-1.22cm}{\includegraphics[height=2.454cm]{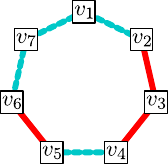}}.\] If we start in the diagram on the left at the replica $\vv_{n-1}={\bf v}_6$ and walk clockwise until reaching ${\bf v}_1$, the replicas other than ${\bf v}_n$ that we must cross are ${\bf v}_4,{\bf v}_5,{\bf v}_3$, so $a_0=4$. If we walk clockwise from ${\bf v}_1$ to ${\bf v}_2$, then we do not pass through any other replicas, so $a_1=1$. If we walk clockwise from ${\bf v}_2$ to ${\bf v}_3$, then the replicas other than ${\bf v}_n$ that we cross are ${\bf v}_6,{\bf v}_4,{\bf v}_5$, so $a_2=4$. Continuing in this fashion, we find that $a_3=4$, $a_4=1$, and $a_5=4$. Thus, our infinite sequence $(a_\beta)_{\beta\geq 0}$ is $4,1,4,4,1,4,4,1,4,\ldots$. This sequence has period $p_\sigma=3$. Also, $\sum_{\ell=0}^{n-2}a_\ell=4+1+4+4+1+4=3(n-1)$, so $m_\sigma=3$. 

We have $\Eflect=\{\{v_2,v_3\},\{v_3,v_4\},\{v_5,v_6\}\}$ and $\Efract=\{\{v_1,v_2\},\{v_4,v_5\},\{v_6,v_7\},\{v_7,v_1\}\}$, so
\[Y_1=\{v_2,v_3,v_4,v_7\}\quad\text{and}\quad Y_{-1}=\{v_1,v_5,v_6\}.\] Thus, $\mu_\sigma=|Y_1|=4$. 
\end{example} 

Note that if the cycle graph $G$ and the partition $\Eflect\sqcup\Efract$ are fixed, then there are only two possible values of $\mu_\sigma$. More precisely, for all $\sigma,\sigma'\in\Lambda_G$, we have either $\mu_\sigma=\mu_{\sigma'}$ or $\mu_\sigma=n-\mu_{\sigma'}$. 

\begin{theorem}\label{thm:cycle}
Let $G=(V,E)$ be a cycle graph with $n$ vertices, and fix a partition $E=\Eflect\sqcup\Efract$ of $E$ into a set of reflection edges and a set of refraction edges. Assume $|\Efract|$ is even. Let $\sigma\in\Lambda_G$. The orbit of $\Theta$ containing $(\sigma,1,1)$ has size \[\frac{np_\sigma}{\gcd(n,\mu_\sigma)}(\mu_\sigma m_\sigma+(n-\mu_\sigma)(n-1-m_\sigma)).\] 
\end{theorem}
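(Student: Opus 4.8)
The plan is to recast the iteration of $\Theta$ from $(\sigma,1,1)$ as a sequence of \emph{rides} of one distinguished vertex of $G$ around $\Cycle_n$, to identify the ride data with $(a_\beta)$, $m_\sigma$, $\mu_\sigma$, $p_\sigma$, and then to count.

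\smallskip
\textbf{Step 1 (rider reformulation).} For a state $(\tau,i,\epsilon)\in\XX_G$, call $\tau^{-1}\big(i+\tfrac12(1-\epsilon)\big)$ — the vertex of $G$ at the tail of the directed edge of $\Cycle_n$ encoded by $(i,\epsilon)$ — the \emph{rider}. A short inspection of \eqref{eq:Theta} shows that one application of $\Theta$ either moves the rider one step along $\Cycle_n$ in the direction $\epsilon$ (transposing it with the vertex immediately ahead of it, which one checks is not a $G$-neighbor of the rider) or performs a \emph{bump}: when the vertex immediately ahead \emph{is} a $G$-neighbor of the rider, that neighbor becomes the new rider, the orientation $\epsilon$ is unchanged if the connecting edge lies in $\Eflect$ and is negated if it lies in $\Efract$, and the labeling changes by at most one transposition. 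Proceeding by induction on the number of bumps, one shows that, starting from $(\sigma,1,1)$, the successive riders are $v_n,v_1,v_2,\ldots,v_{n-1},v_n,v_1,\ldots$: the base case is exactly where the normalization $\sigma(v_1)<\sigma(v_{n-1})$ is used, and the inductive step uses that a new rider, just created by a bump, moves away from the previous rider (which occupies the adjacent $\Cycle_n$-position behind it) and hence first reaches its \emph{other} $G$-neighbor. Since $v_n\in Y_1$ and the refraction edges are precisely those joining $Y_1$ to $Y_{-1}$, this yields the orientation dictionary: when the rider is a vertex $v$, one has $\epsilon=+1$ if $v\in Y_1$ and $\epsilon=-1$ if $v\in Y_{-1}$.

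\smallskip
\textbf{Step 2 (one tour of $G$; the technical heart).} Partition the rider trajectory into \emph{arcs} $v_{n-1}\!\rightsquigarrow\!v_n\!\rightsquigarrow\!v_1$, $v_1\!\rightsquigarrow\!v_2$, $\ldots$, $v_{n-2}\!\rightsquigarrow\!v_{n-1}$, so that the passage through $v_n$ is absorbed into the arc ending at $v_1$; a full pass through these $n-1$ arcs is a \emph{$G$-loop}. Recording, for each $v_j$, the clockwise distance from $v_n$ to $v_j$ along $\Cycle_n$ gives a cyclic word on $\Z/(n-1)\Z$ whose successive gaps along $v_{n-1},v_1,v_2,\ldots,v_{n-2}$ are $a_0,a_1,\ldots,a_{n-2}$. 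The crucial claim, proved by tracking how the leapfrogging riders permute $\Cycle_n$-positions, is that one $G$-loop cyclically shifts this gap word by one (so after $p_\sigma$ consecutive $G$-loops it is restored and the labeling has merely been rotated), and that in the $k$-th $G$-loop the number of $\Theta$-steps spent with a vertex $v$ as rider is $a$ if $v\in Y_1$ and $(n-1)-a$ if $v\in Y_{-1}$, where $a$ is the entry of the $(k{-}1)$-fold shift of $(a_\beta)$ attached to $v$'s arc. (I would formalize the shift statement by presenting the relevant labeling data as a juggling-sequence/affine-permutation type object with throw sequence $(a_\beta)$ and checking that $\Theta$ advances it.) Because $\sum_{\ell=0}^{p_\sigma-1}a_\ell=m_\sigma p_\sigma$, summing over one period shows that over $p_\sigma$ $G$-loops each vertex contributes $m_\sigma p_\sigma$ (if in $Y_1$) or $(n-1-m_\sigma)p_\sigma$ (if in $Y_{-1}$) to the step count; as $v_n\in Y_1$, $|Y_1|=\mu_\sigma$, and $|Y_{-1}|=n-\mu_\sigma$, these total exactly
\[N:=p_\sigma\big(\mu_\sigma m_\sigma+(n-\mu_\sigma)(n-1-m_\sigma)\big).\]
I expect this step to be the main obstacle: proving the leapfrog invariant behind the shift, handling the special bookkeeping forced by $v_n$ (which is why the relevant walk "skips" $\vv_n$ and the counts feature $n-1$), and absorbing the off-by-one caused by the very first arc beginning at the rider $v_n$ rather than $v_{n-1}$, all require care.

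\smallskip
\textbf{Step 3 (counting).} Using the index-update rules of \eqref{eq:Theta} (each non-bump and each reflection bump changes the index by $+\epsilon$, each refraction bump by $-\epsilon$) together with Step~1 and the step counts of Step~2 — and noting that the $\pm2$ corrections occurring at $Y_1\!\leftrightarrow\!Y_{-1}$ refraction bumps cancel in pairs because $|\Efract|$ is even — one computes $\Theta^N(\sigma,1,1)=(\rho^{c}\!\circ\!\sigma,\,1+c,\,1)$, where $\rho\colon j\mapsto j+1$ on $\Z/n\Z$ and $c\equiv\mu_\sigma m_\sigma p_\sigma-(n-\mu_\sigma)(n-1-m_\sigma)p_\sigma\equiv-\mu_\sigma p_\sigma\pmod n$. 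Setting $\psi_c(\tau,i,\epsilon)=(\rho^{c}\!\circ\!\tau,\,i+c,\,\epsilon)$, a direct check from \eqref{eq:Theta} gives $\psi_c\circ\Theta=\Theta\circ\psi_c$, whence $\Theta^{kN}(\sigma,1,1)=\psi_c^{\,k}(\sigma,1,1)=(\rho^{kc}\!\circ\!\sigma,\,1+kc,\,1)$, which equals $(\sigma,1,1)$ precisely when $n\mid kc$. The minimality of $p_\sigma$ as the period of $(a_\beta)$ (no proper sub-period can be restored, even across a fractional $G$-loop) forces the first return to occur at a multiple of $N$; and because $p_\sigma\mid n-1$ implies $\gcd(p_\sigma,n)=1$, we have $\gcd(n,c)=\gcd(n,\mu_\sigma p_\sigma)=\gcd(n,\mu_\sigma)$. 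Hence the orbit has size
\[\frac{n}{\gcd(n,\mu_\sigma)}\cdot N=\frac{n p_\sigma}{\gcd(n,\mu_\sigma)}\big(\mu_\sigma m_\sigma+(n-\mu_\sigma)(n-1-m_\sigma)\big),\]
which is the asserted value.
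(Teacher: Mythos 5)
Your proposal follows essentially the same route as the paper's proof: normalize so that the coin/rider circulates clockwise starting from $v_n$, show that the $j$-th rider occupies the stone for $a_j$ steps if it lies in $Y_1$ and for $n-1-a_j$ steps if it lies in $Y_{-1}$, sum over $p_\sigma$ loops around $G$ to get $N=p_\sigma\bigl(\mu_\sigma m_\sigma+(n-\mu_\sigma)(n-1-m_\sigma)\bigr)$, and identify $\Theta^{N}$ on this orbit with the cyclic shift $\cyc^{-p_\sigma\mu_\sigma}$. All of your intermediate claims check out against the paper: the shift-by-one of the gap word per $G$-loop, the per-rider step counts, the cancellation of the $\pm 2$ refraction corrections (because the coin crosses from $Y_1$ to $Y_{-1}$ exactly as often as from $Y_{-1}$ to $Y_1$), the forcing of the first return to a multiple of $N$ via the coprimality of $p_\sigma$ and $n$, and $\gcd(n,p_\sigma\mu_\sigma)=\gcd(n,\mu_\sigma)$. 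The one substantive item you defer --- the ``leapfrog'' invariant of your Step~2 --- is precisely the content of \cref{lem:cycle} together with the recurrence \eqref{eq:t_r_recurrence} and the identity \eqref{eq:calD}, which the paper proves by introducing the auxiliary diagrams $\mathcal D_r$; your proposed juggling-sequence formalization would have to reproduce that induction, but the statement you need is true and the rest of your derivation from it is sound.
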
 

\begin{example}
Suppose $G$, $\Eflect$, $\Efract$, and $\sigma$ are as in \cref{exam:cycle1}. Then $p_\sigma=3$, $m_\sigma=3$, and $\mu_\sigma=4$, so \cref{thm:cycle} tells us that the orbit of $\Theta$ containing $(\sigma,1,1)$ has size $441$. 
\end{example}

Suppose $G$ is an $n$-vertex cycle graph whose edges are all reflection edges. In this case, we have $\mu_\sigma=n$, so the formula in \cref{thm:cycle} simplifies to \[p_\sigma m_\sigma n.\] This determines the sizes of the orbits of (ordinary) toric promotion for a cycle graph, which has not been done previously.   

Another special case worth mentioning is that in which $n$ is even and $G$ is an $n$-vertex cycle graph whose edges are all refraction edges. In this case, we have $\mu_\sigma=n/2$, so the formula in \cref{thm:cycle} simplifies to \[p_\sigma n(n-1).\] Because this formula is independent of $m_\sigma$ and $\mu_\sigma$, we can give a more compact description of the orbit structure of $\Theta$ in \cref{cor:sieving} via the \emph{cyclic sieving phenomenon}.  

Let $X$ be a finite set, and let $\mathcal C_\omega=\langle g\rangle$ be a cyclic group of order $\omega$ that acts on $X$. Let $F(q)\in\mathbb C[q]$. Following Reiner, Stanton, and White \cite{CSP}, we say the triple $(X,\mathcal C_{\omega},F(q))$ \dfn{exhibits the cyclic sieving phenomenon} if for every $k\in\Z$, we have \[F(e^{2\pi ik/\omega})=|\{x\in X:g^k\cdot x=x\}|.\] 

Let $[k]_q=\frac{1-q^k}{1-q}$ and $[k]_q!=[k]_q[k-1]_q\cdots[1]_q$. Given a partition $\lambda=(\lambda_1,\ldots,\lambda_\ell)$ of a positive integer $N$, let $b(\lambda)=\sum_{i=1}^\ell (i-1)\lambda_i$. We also view $\lambda$ as a Young diagram. A \dfn{standard Young tableaux} of shape $\lambda$ is a filling of the boxes of $\lambda$ with the numbers $1,\ldots,N$ so that rows and columns are increasing; let $\SYT(\lambda)$ denote the set of standard Young tableaux of shape $\lambda$. A \dfn{descent} of a tableaux $T\in\mathrm{SYT}(\lambda)$ is an entry $i\in[N-1]$ such that $i+1$ appears in a strictly lower row than $i$ in $T$. The sum of the descents of $T$ is called the \dfn{major index} of $T$ and is denoted $\maj(T)$. Let \[f^\lambda(q)=\sum_{T\in\SYT(\lambda)}q^{\maj(T)}.\] It is known that \[f^\lambda(q)=q^{b(\lambda)}\frac{[N]_q!}{\prod_{\square\in\lambda}[h_\lambda(\square)]_q},\]
where $h_\lambda(\square)$ is the \emph{hook length} of a box $\square$ in $\lambda$ (see \cite[Corollary~7.21.5]{EC2}). Let $f^\lambda_{N\mid\maj}$ denote the number of tableaux $T\in\SYT(\lambda)$ such that $\maj(T)$ is divisible by $N$. Equivalently, \begin{equation}\label{eq:plug_in_roots}
f^\lambda_{N\mid\maj}=\frac{1}{N}\sum_{j=0}^{N-1}f^\lambda(e^{2\pi ij/N}).
\end{equation}

\begin{corollary}\label{cor:sieving}
Suppose $n\geq 4$ is even, and let $G$ be an $n$-vertex cycle graph whose edges are all refraction edges. The sizes of the orbits of $\Theta$ are all divisible by $n(n-1)$. The order of $\Theta^{n(n-1)}$ is $1$ if $n=4$ and is $n-1$ if $n>4$. Let the cyclic group $\mathcal C_{n-1}=\langle g\rangle$ act on $\Xi_G$ by $g\cdot (\sigma,i,\epsilon)=\Theta^{n(n-1)}(\sigma,i,\epsilon)$. Then the triple  
\[\left(\Xi_G,\,\mathcal C_{n-1},\,2n^2(n-1)\sum_{\lambda\vdash n-1}f^\lambda_{n-1\mid\maj}f^\lambda(q)\right)\] exhibits the cyclic sieving phenomenon. 
\end{corollary}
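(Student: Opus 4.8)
The plan is to leverage \cref{thm:cycle} in the special case where all edges are refraction edges and $n$ is even. As noted in the paragraph preceding this corollary, in that case $\mu_\sigma=n/2$ and the orbit of $\Theta$ containing $(\sigma,1,1)$ has size $p_\sigma n(n-1)$, where $p_\sigma\mid n-1$; applying the reduction via the automorphisms $\boldsymbol{\omega}_{i,\epsilon}$, every orbit of $\Theta$ has size $p_\sigma n(n-1)$ for some divisor $p_\sigma$ of $n-1$. This immediately gives the first claim (all orbit sizes divisible by $n(n-1)$). For the order of $\Theta^{n(n-1)}$: each orbit of $\Theta^{n(n-1)}$ has size $p_\sigma$, and I would show that when $n>4$ there exists a $\sigma$ with $p_\sigma=n-1$ (so the lcm of the $p_\sigma$ is $n-1$), while when $n=4$ we always have $p_\sigma=1$ since $p_\sigma\mid n-1=3$ but a short direct check (or the structure of $a_\beta$) forces $p_\sigma\in\{1\}$ for $n=4$; the sequence $(a_\beta)$ has period dividing $n-1=3$ and one checks the only value achieved is the constant sequence. (More carefully: for $n=4$, $\sum_{\ell=0}^{2}a_\ell=3m_\sigma$ with each $a_\ell\geq 1$, and the cyclic structure forces all $a_\ell$ equal.)

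The heart of the matter is identifying the $\mathcal C_{n-1}$-set $(\Xi_G,g)$, where $g$ acts by $\Theta^{n(n-1)}$, well enough to compute the fixed-point counts $|\{x\in\Xi_G: g^k\cdot x = x\}|$ and match them against $F(e^{2\pi i k/(n-1)})$ for the proposed polynomial $F(q)=2n^2(n-1)\sum_{\lambda\vdash n-1}f^\lambda_{n-1\mid\maj}f^\lambda(q)$. The key observation is that an element $x$ is fixed by $g^k$ iff $k$ is a multiple of $p_x$ (the period of its $\Theta^{n(n-1)}$-orbit), so $|\{x: g^k\cdot x=x\}| = \sum_{d\mid\gcd(k,n-1)} d\cdot(\text{number of }\Theta^{n(n-1)}\text{-orbits of size exactly }d)$, which is $\sum_{x: p_x\mid k} 1$ counted with the right multiplicity; equivalently it equals the number of triples $x\in\Xi_G$ whose period $p_x$ divides $k$. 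So I must compute, for each divisor $k$ of $n-1$ (it suffices to consider $k\mid n-1$ by periodicity of the root-of-unity evaluation), how many triples $(\sigma,i,\epsilon)$ have $p_\sigma$ dividing $k$.

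To make this match the RSTW side, I would invoke the known cyclic sieving phenomenon for $\SYT$ under promotion / the $q$-analogue $f^\lambda(q)$ evaluated at roots of unity: the quantity $\sum_\lambda f^\lambda_{n-1\mid\maj} f^\lambda(e^{2\pi i k/(n-1)})$ should be reorganized using \eqref{eq:plug_in_roots} and Springer's / RSK-type identities so that $\sum_\lambda f^\lambda(\zeta^a)f^\lambda(\zeta^b)$ becomes a count of pairs of standard Young tableaux (via RSK, a count of permutations) with prescribed cyclic symmetry — concretely, the number of permutations in $\mathfrak{S}_{n-1}$ fixed by a suitable power of the long cycle, which by the CSP for the $q$-analogue of $(n-1)!$ is $(n-1)!$ when the relevant exponent is $0\bmod(n-1)$ and can be written via $[\,n-1\,]_q!$ evaluated at roots of unity otherwise. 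The combinatorial bijection I need is between triples $(\sigma,i,\epsilon)$ with $p_\sigma\mid k$ and an explicit set of size matching $2n^2(n-1)\cdot\frac{1}{n-1}\sum_{j}(\text{stuff})$; the factor $2n^2(n-1)$ should decompose as: $2$ for $\epsilon\in\{\pm1\}$, $n$ for $i\in\Z/n\Z$, and $n(n-1)$ for the "free" $p_\sigma=1$ multiplicity inside each orbit, leaving the nontrivial cyclic data to be governed by a permutation of $[n-1]$ obtained by deleting the label $n$ (equivalently, reading off the cyclic word of replicas $\vv_1,\dots,\vv_{n-1}$ around $\Cycle_n$ relative to $\vv_n$) — this is exactly where the sequence $(a_\beta)$ and its period $p_\sigma$ enter, since $p_\sigma$ is the order of cyclic symmetry of that permutation.

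The main obstacle I anticipate is the last step: setting up the precise bijection between $\Theta$-orbit data on $\Xi_G$ and $\SYT$/permutation data, and in particular verifying that the period $p_\sigma$ coincides with the cyclic-symmetry order appearing on the RSK side, so that "$p_\sigma\mid k$" translates exactly into "fixed by the relevant power of a long cycle." Getting the normalization constant $2n^2(n-1)$ and the exceptional behavior at $n=4$ exactly right (where $f^\lambda$ for $\lambda\vdash 3$ and the degenerate period force $\Theta^{n(n-1)}=\id$) will require care, as will handling the $\sigma(v_1)<\sigma(v_{n-1})$ and $\sigma(v_n)=1$ normalizations from the setup of \cref{thm:cycle} versus the unnormalized count over all of $\Xi_G$. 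Once the bijection is in hand, checking $F(e^{2\pi i k/(n-1)}) = |\{x: g^k\cdot x = x\}|$ for all $k$ reduces to the established CSP instance for standard Young tableaux with the major-index statistic, applied summand-by-summand in $\lambda$.
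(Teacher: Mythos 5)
Your plan follows the paper's proof essentially step for step: the orbit sizes $p_\sigma n(n-1)$ from \cref{thm:cycle} give the divisibility and order claims, the fixed-point count for $\Theta^{kn(n-1)}$ reduces (via the $2n$-to-$1$ map $(\rho,i,\epsilon)\mapsto(\boldsymbol{\omega}_{i,\epsilon}\circ\rho,1,1)$ and the normalization $\sigma(v_n)=1$) to counting labelings with $p_\sigma\mid k$, and that count is matched to the polynomial by the Barcelo--Reiner--Stanton bimahonian identity $\sum_{\lambda\vdash n-1}f^\lambda(e^{2\pi ij/(n-1)})f^\lambda(e^{2\pi ik/(n-1)})=|\{\xi\in\mathfrak S_{n-1}:c^k\xi c^j=\xi\}|$ summed over $j$ via \eqref{eq:plug_in_roots}. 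The one point to state precisely (which the paper also treats in a single line) is that $p_\sigma\mid k$ corresponds to the \emph{two-sided} symmetry $c^k\xi c^j=\xi$ for some $j$, where $\xi$ is the permutation of $[n-1]$ read off from the cyclic positions of the replicas other than $\vv_n$, rather than a one-sided fixed-point condition.
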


\begin{remark}
Using the Robinson--Schensted correspondence (in a manner similar to \cite{BRS}), one can show that the polynomial in \cref{cor:sieving} is equal to \[2n^2(n-1)\sum_{\substack{w\in\mathfrak S_{n-1} \\ n-1\mid \maj(w^{-1})}}q^{\maj(w)},\] where $\maj(u)$ denotes the major index (i.e., the sum of the descents) of a permutation $u$.   
\end{remark} 

\subsection{Outline} 
In \cref{sec:affine_S_n}, we discuss necessary background concerning the affine symmetric group $\affS_n$. In \cref{sec:billiards}, we introduce refraction billiards and explain how toric promotion with reflections and refractions (and, in particular, ordinary toric promotion) can be seen as a combinatorial billiards system in a torus. \cref{sec:coins} provides a perspective that will allow us to compute orbit sizes of $\Theta$. We prove \cref{thm:forest} in \cref{sec:forests}, and then we prove \cref{thm:cycle,cor:sieving} in \cref{sec:cycles}. 
Finally, we discuss future research directions in \cref{sec:conclusion}.   

\section{The Affine Symmetric Group}\label{sec:affine_S_n} 

Fix an integer $n\geq 3$. Let $\mathfrak S_n$ denote the $n$-th symmetric group, which is the group of permutations of the set $[n]:=\{1,\ldots,n\}$. We sometimes represent permutations in $\mathfrak S_n$ as words in one-line notation. We also frequently identify $[n]$ with $\Z/n\Z$ in the obvious manner and view permutations in $\mathfrak S_n$ as bijections from $[n]$ to $\Z/n\Z$. 

The \dfn{affine symmetric group} $\affS_n$ is the group of bijections $u\colon\Z\to\Z$ such that 
\begin{itemize}
\item $u(i+n)=u(i)+n$ for all $i\in\Z$; 
\item $\sum_{i=1}^nu(i)=\binom{n+1}{2}$. 
\end{itemize} 
An element $u\in\affS_n$ is uniquely determined by where it sends $1,2,\ldots,n$, so we can represent it via its \dfn{window notation} $[u(1),u(2),\ldots,u(n)]$. 
For $a,b\in\Z$ with $a\not\equiv b\pmod{n}$, let us write $r_{a,b}=\prod_{j\in\Z}(a+jn\,\,\,\, b+jn)$, where $(a+jn\,\,\,\, b+jn)$ is the transposition that swaps $a+jn$ and $b+jn$. We have $r_{a,b}\in\affS_n$. Let $\s_i=r_{i,i+1}$. Then $\affS_n$ is a Coxeter group whose \dfn{simple reflections} are $\s_1,\s_2,\ldots,\s_n$ and whose Coxeter graph is a simply laced cycle.  

Consider the $(n-1)$-dimensional Euclidean space \[\UU=\{(x_1,\ldots,x_n)\in\mathbb R^n:x_1+\cdots+x_n=0\}.\] For $k\in\Z$ and distinct $i,j\in[n]$, define the hyperplane \[\HH_{i,j}^k=\{(x_1,\ldots,x_n)\in \UU:x_i-x_j=k\}\subseteq\UU.\] Let $\mathcal H_n=\{\HH_{i,j}^k:1\leq i<j\leq n,\,\, k\in\Z\}$, and let $\HHH=\bigcup_{\HH\in\mathcal H_n}\HH$. There is a natural faithful right action of $\affS_n$ on $\UU$. For $1\leq i\leq n-1$, the simple reflection $\s_i$ acts as the reflection through $\HH_{i,i+1}^0$; the simple reflection $\s_n$ acts as the reflection through $\HH_{1,n}^1$. For each $\HH\in\mathcal H_n$, there is a unique element of $\affS_n$ that acts on $\UU$ as the reflection through $\HH$. 

The closures of the connected components of $\UU\setminus\HHH$ are congruent simplices called \dfn{alcoves}. The \dfn{fundamental alcove} is \[\BB=\{(x_1,\ldots,x_n)\in\UU:x_1\geq x_2\geq\cdots\geq x_n\geq x_1-1\}.\] The (right) action of $\affS_n$ on $\UU$ induces a free and transitive (right) action of $\affS_n$ on the set of alcoves. This allows us to identify elements of $\affS_n$ with alcoves via the map $u\mapsto\BB u$. Two distinct alcoves are \dfn{adjacent} if they have a common facet. For $u\in\affS_n$, the alcoves adjacent to $\BB u$ are precisely $\BB \s_1u,\BB\s_2u,\ldots,\BB \s_{n}u$.

The \dfn{coroot lattice} is the set $Q^{\vee}=\UU\cap\Z^n$ of integer points in $\UU$. For each $\gamma\in Q^{\vee}$, there is an element $t_\gamma\in\affS_n$ that acts on $\UU$ via translation by $\gamma$; that is, $xt_\gamma=x+\gamma$ for all $x\in\UU$. We may identify $Q^{\vee}$ with the normal subgroup $\{t_\gamma:\gamma\in Q^{\vee}\}$ of $\affS_n$. The space $\TT_{n-1}=\UU/Q^{\vee}$ is an $(n-1)$-dimensional torus; let $\quot\colon\UU\to\TT_{n-1}$ denote the natural quotient map. For each $\HH\in\mathcal H_n$, we obtain a toric hyperplane $\quot(\HH)\subseteq \TT_{n-1}$; let $\quot(\mathcal H_n)=\{\quot(\HH):\HH\in\mathcal H_n\}$ be the resulting \emph{toric hyperplane arrangement} (see \cite{Ehrenborg} for more on toric hyperplane arrangements). The closures of the connected components of $\TT_{n-1}\setminus\quot(\HHH)$ are called \dfn{toric regions}. Toric regions are in bijection with orbits of alcoves under the action of $Q^{\vee}$. In \cref{fig:start}, two alcoves in the upper-left image are in the same $Q^{\vee}$-orbit if and only if they have the same color, and the six different colors of alcoves correspond to the six different toric regions shown in the upper-right image. We can view $\mathfrak S_n$ as the subgroup of $\affS_n$ generated by $\s_1,\ldots,\s_{n-1}$. The set $\BB\mathfrak S_n$ is a fundamental domain for the action of $Q^{\vee}$ on $\UU$. There is a bijection $\psi$ from $\mathfrak S_n$ to the set of toric regions defined by $\psi(u)=\quot(\BB u)$. In the image in the upper right of \cref{fig:start}, each permutation $u\in\mathfrak S_3$ is represented as a labeling of a path graph and is drawn inside the toric region $\psi(u)$.  

The affine symmetric group decomposes as the semidirect product $\affS_n\cong\mathfrak S_n\ltimes Q^{\vee}$. It follows that $\affS_n/Q^{\vee}\cong\mathfrak S_n$, and there is a natural quotient map $\affS_n\to\mathfrak S_n$, which we denote by $u\mapsto\overline u$. The one-line notation of the permutation $\overline u$ is obtained by reducing the window notation of $u$ modulo~$n$. For example, if $u=[5,7,1,4,-2]\in\affS_5$, then $\overline u=52143\in\mathfrak S_5$. This group-theoretic quotient map is related to the topological quotient map $\quot$ via the identity $\psi(\overline u)=\quot(\BB u)$. For $i\in\Z/n\Z$, we write $s_i$ for the transposition in $\mathfrak S_n$ that swaps $i$ and $i+1$. Thus, $s_i=\overline{\s_i}$.

\section{Billiards with Reflections and Refractions}\label{sec:billiards} 

In \cref{sec:conclusion}, we will briefly discuss a broad definition of combinatorial billiards with reflections and refractions for arbitrary (finite-rank) Coxeter groups. However, for most of this article, we will focus only on the affine symmetric groups, in which the geometric perspective is especially natural. This approach also has the advantage of being more concrete.  

Let $\mathrm{L}$ be a line in the Euclidean space $\mathbb R^2$. Consider a beam of light that travels in a straight line through $\mathbb R^2$, hits $\mathrm{L}$, and then leaves in a straight line. We allow for three possible scenarios. In one scenario, the beam of light passes directly through $\mathrm{L}$. In the second scenario, the beam of light reflects off of $\mathrm{L}$ so that the angle of incidence equals the angle of reflection. In the third scenario, the beam of light passes through $\mathrm{L}$, but it \dfn{refracts} so that its new direction is opposite to what it would have been if the light beam had reflected.\footnote{When a refraction occurs in our setting, the \emph{refraction coefficient} (see \cite{Baird,Davis1,Davis2}) is $-1$. In the physical world, refraction coefficients are usually positive. However, recent breakthroughs in physics and material science have found certain \emph{metamaterials} that produce negative refraction coefficients \cite{meta}.} See \cref{fig:pass_reflect_refract}.

\begin{figure}[ht]
  \begin{center}
  \includegraphics[height=2cm]{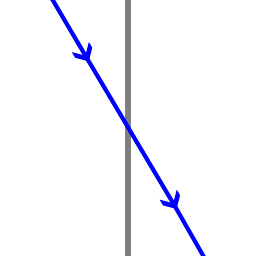}
  \qquad\qquad 
  \includegraphics[height=2cm]{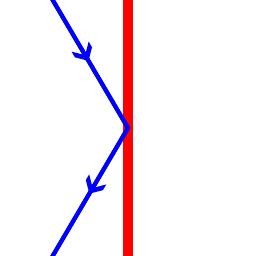} \qquad\quad 
  \includegraphics[height=2cm]{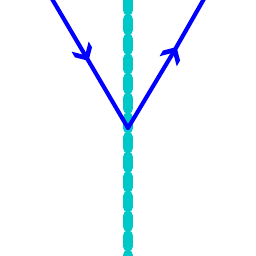}
  \end{center}
\caption{In each image, a beam of light hits a vertical line. On the left, the light beam passes through; in the middle, it reflects; on the right, it refracts.  }\label{fig:pass_reflect_refract}
\end{figure}

We can easily extend our definitions of reflections and refractions to higher dimensions. Let $\mathbb E$ be a $d$-dimensional Euclidean space. Let $\HH$ be an affine hyperplane in $\mathbb E$. Consider a beam of light that travels in a straight line through $\mathbb E$, hits $\HH$ at a point $z$, and then leaves in a straight line. Let $\mathrm{L}^\perp$ be the line passing through $z$ that is orthogonal to $\HH$. We will assume that there is a $2$-dimensional plane $\PP$ containing $\mathrm{L}^\perp$ and the path that the beam of light follows before and after hitting $\HH$. We will also assume that the beam of light only intersects $\HH$ at $z$. Consider the line $\mathrm{L}=\PP\cap \HH$. By restricting our attention to the plane $\PP$, we can speak about the beam of light passing through $\mathrm{L}$, reflecting off of $\mathrm{L}$, or refracting through $\mathrm{L}$, as before. In these three scenarios, we say the beam of light \dfn{passes directly through} $\HH$, \dfn{reflects} off of $\HH$, or \dfn{refracts} through $\HH$, respectively.  

We now describe how to construct a sequence, which we call a \emph{discrete billiards trajectory}, that discretizes a beam of light.

Let $\widetilde\Xi=\affS_n\times\Z/n\Z\times\{\pm 1\}$. For $(u,i,\epsilon)\in\widetilde\Xi$, define $\nu(u,i,\epsilon)$ to be the vector in $Q^{\vee}$ that has entry $\epsilon(1-n)$ in position $\overline u^{-1}(i+\frac{1}{2}(1-\epsilon))$ and has all other entries equal to $\epsilon$. For example, suppose $n=5$, $u=[5,7,1,4,-2]$, $i=2$, and $\epsilon=-1$. Then $\overline u=52143$, so $\overline u^{-1}(i+\frac{1}{2}(1-\epsilon))=\overline u^{-1}(3)=5$. Thus, $\nu(u,i,\epsilon)=(-1,-1,-1,-1,4)$. 

Now define a sequence $u_0,u_1,u_2,\ldots$ with $u_0=u$ and $u_j=s_{i+\epsilon(j-1)}u_{j-1}$ for all $j\geq 1$. Let $\bb$ be a beam of light that starts in the interior of $\BB u$ and travels in the direction of $\nu(u,i,\epsilon)$. One can show that $\bb$ passes through the interiors of the alcoves $\BB u_0,\BB u_1,\BB u_2,\ldots$ (and no others), in this order. See \cref{fig:pass_combinatorial} for an example in $\affS_3$ with $i=1$ and $\epsilon=1$. 

\begin{figure}[ht]
  \begin{center}
  \includegraphics[width=0.47\linewidth]{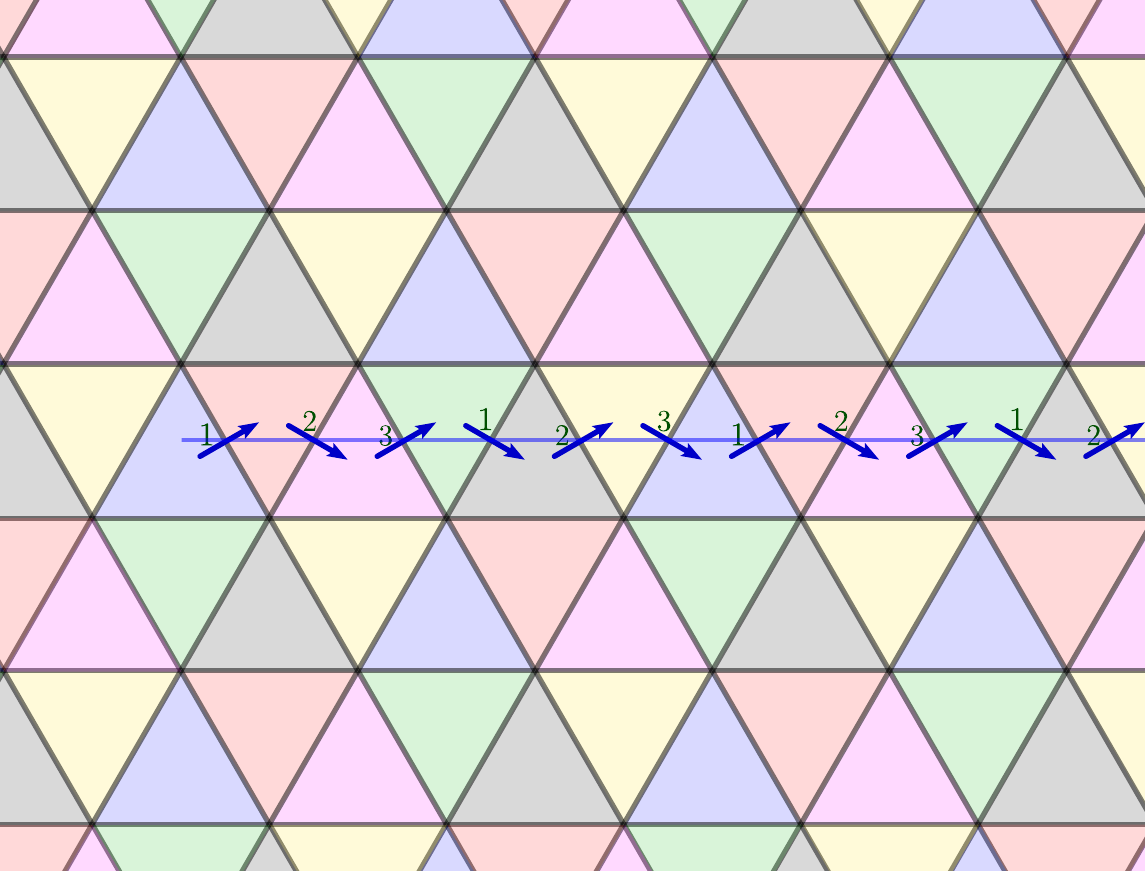} 
  \end{center}
\caption{A sequence of alcoves discretizes a beam of light. Each arrow pointing from a region $w$ to a region $\s_jw$ is labeled with the index $j$.}\label{fig:pass_combinatorial}
\end{figure} 

Let $k$ be a positive integer, and let $\HH^{(u_{k-1},i+\epsilon(k-1))}$ be the hyperplane separating the alcoves $\BB u_{k-1}$ and $\BB u_k$. We can modify the sequence $u_0,u_1,u_2,\ldots$ constructed above to form a new sequence $u_0^{\Reflect},u_1^{\Reflect},u_2^{\Reflect},\ldots$ as follows. Let $u_0^{\Reflect}=u$. Let $u_j^{\Reflect}=s_{i+\epsilon(j-1)}u_{j-1}^{\Reflect}$ for all positive integers $j\neq k$, and let $u_k=u_{k-1}$. Let $\bb^{\Reflect}$ be the beam of light that traces out the same trajectory as $\bb$ until reflecting off of the hyperplane $\HH^{(u_{k-1},i+\epsilon(k-1))}$. Then $\bb^{\Reflect}$ passes through the interiors of the alcoves $\BB u_0^{\Reflect},\BB u_1^{\Reflect},\BB u_2^{\Reflect},\ldots$ (and no others), in this order. See the left side of \cref{fig:reflect_refract_combinatorial}, where $k=7$, $i=1$, $\epsilon=1$, and $\HH^{(u_{6},1)}$ is drawn in thick {\color{red}red}. 

We can also modify the sequence $u_0,u_1,u_2,\ldots$ to form another sequence $u_0^{\Refract},u_1^{\Refract},u_2^{\Refract},\ldots$ as follows. Let $u_j^{\Refract}=u_j$ for all $0\leq j\leq k$. For $j\geq k+1$, let $u_j^{\Refract}=s_{i+\epsilon(2k-1-j)}u_{j-1}^{\Refract}$. Let $\bb^{\Refract}$ be the beam of light that traces out the same trajectory as $\bb$ until refracting through the hyperplane $\HH^{(u_{k-1},i+\epsilon(k-1))}$. Then $\bb^{\Refract}$ passes through the interiors of the alcoves $\BB u_0^{\Refract},\BB u_1^{\Refract},\BB u_2^{\Refract},\ldots$ (and no others), in this order. See the right side of \cref{fig:reflect_refract_combinatorial}, where $k=7$, $i=1$, $\epsilon=1$, and $\HH^{(u_{6},1)}$ is drawn in thick {\color{Teal}teal}. 

\begin{figure}[ht]
  \begin{center}
  \includegraphics[width=0.47\linewidth]{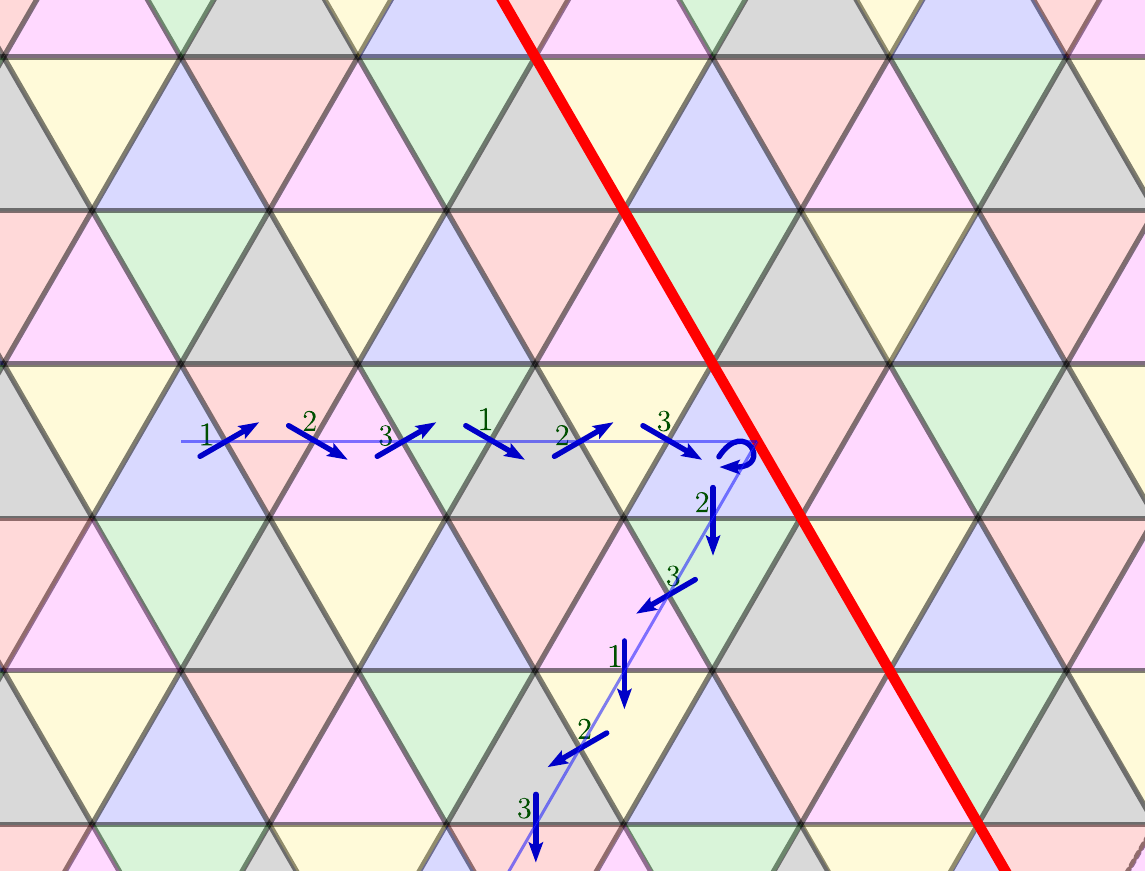} \hspace{0.04\linewidth}   \includegraphics[width=0.47\linewidth]{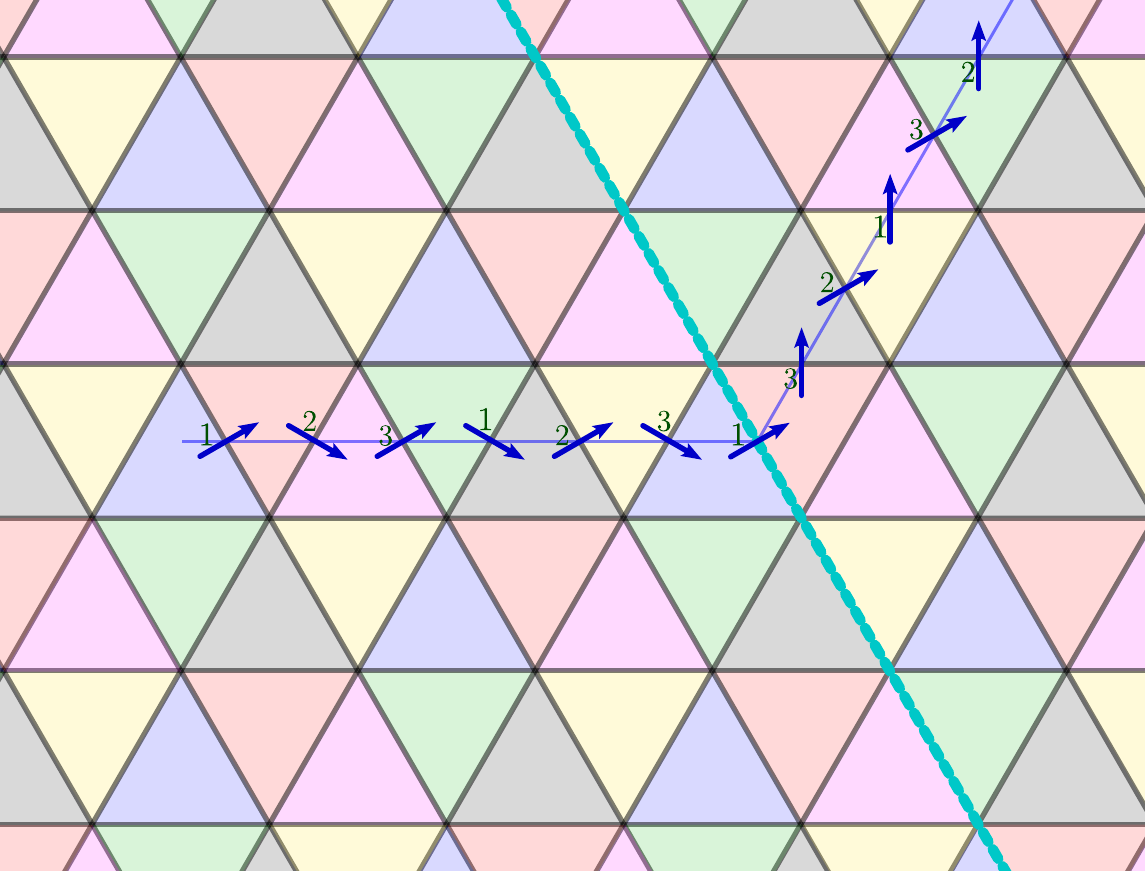} 
  \end{center}
\caption{In both images, a sequence of alcoves discretizes a beam of light, and each arrow pointing from a region $w$ to a region $\s_jw$ is labeled with the index $j$. The loop in the image on the left is not labeled because it corresponds to multiplying by the identity element. On the left, the beam of light reflects; on the right, the beam of light refracts.}\label{fig:reflect_refract_combinatorial}
\end{figure}

Let $\Wflect$ and $\Wfract$ be disjoint subsets of $\mathcal H_n$, and let $\WW=\Wflect\sqcup\Wfract$. Hyperplanes in the sets $\mathcal H_n\setminus\WW$, $\Wflect$, and $\Wfract$ are called \dfn{windows}, \dfn{mirrors}, and \dfn{metalenses}, respectively. For $u\in\affS_n$ and $i\in\Z/n\Z$, let us write $\HH^{(u,i)}$ for the unique hyperplane in $\mathcal H_n$ that separates the alcoves $\BB u$ and $\BB\s_i u$. Let $\widetilde\Xi=\affS_n\times\Z/n\Z\times\{\pm 1\}$. Define a map $\widetilde\Theta\colon\widetilde\Xi\to\widetilde\Xi$ by letting 
\begin{equation}\label{eq:tildeTheta}
\widetilde\Theta(u,i,\epsilon)=\begin{cases} (\s_iu,i+\epsilon,\epsilon) & \mbox{if }\HH^{(u,i)}\not\in \WW; \\   (u,i+\epsilon,\epsilon) & \mbox{if }\HH^{(u,i)}\in \Wflect; \\   (\s_iu,i-\epsilon,-\epsilon) & \mbox{if }\HH^{(u,i)}\in \Wfract.  \end{cases}
\end{equation} 

Start with a triple $(u_0,i_0,\epsilon_0)\in\widetilde\Xi$, and for each positive integer $k$, let $(u_k,i_k,\epsilon_k)=\Theta^k(u_0,i_0,\epsilon_0)$. We call the sequence $u_0,u_1,u_2,\ldots$ a \dfn{discrete billiards trajectory}. The reason for this terminology comes from the fact that there is a beam of light that follows a piecewise linear path, passing through the interiors of the alcoves $\BB u_0,\BB u_1,\BB u_2,\ldots$ (and no others), in this order. Whenever this beam of light hits a window, it passes directly through; whenever it hits a mirror, it reflects; whenever it hits a metalens, it refracts through.\footnote{The articles \cite{Baird,Davis1,Davis2,Jay,Paris} consider \emph{tiling billiards}, in which a bipartite tiling of the plane is fixed and a beam of light refracts (with refraction coefficient $-1$) whenever it passes from one tile to another. Our setting also allows such an interpretation, though possibly in higher dimensions. Indeed, we obtain a tiling of $\mathrm{U}$ whose tiles are just the regions cut out by the metalenses.} 

We now want to project the system constructed above to the torus $\mathbb T_{n-1}$. In the quotient, we want each toric hyperplane to become a \emph{toric window}, a \emph{toric mirror}, or a \emph{toric metalens}. In order for this to make sense, we must assume that we chose the sets $\Wflect$ and $\Wfract$ so that the quotient map $\quot$ does not identify two hyperplanes made of different ``materials.'' In other words, we assume in what follows that every orbit of $\mathcal H_n$ under the action of $Q^\vee$ is contained entirely in one of the sets $\mathcal H_n\setminus\WW$, $\Wflect$, or $\Wfract$. Every such orbit has a unique representative of the form $\HH_{i,j}^0$ with $1\leq i<j\leq n$. Thus, choosing the sets $\Wflect$ and $\Wfract$ is equivalent to choosing a graph $G=([n],E)$ and a partition $E=\Eflect\sqcup\Efract$. Indeed, we simply set \[\Eflect=\{\{i,j\}:\HH_{i,j}^0\in\Wflect\}\quad\text{and}\quad\Efract=\{\{i,j\}:\HH_{i,j}^0\in\Wfract\}.\] By identifying $\Z/n\Z$ with $[n]$ in the obvious manner, we can identify the set $\Lambda_G$ of labelings of $G$ with the symmetric group $\mathfrak S_n$. Then toric promotion with reflections and refactions (defined with respect to $G$) is related to the map $\widetilde\Theta$ via the quotient map from $\affS_n$ to $\mathfrak S_n$. More precisely, if $(u,i,\epsilon)\in\widetilde\Xi$ and $(u',i',\epsilon')=\widetilde\Theta(u,i,\epsilon)$, then $(\overline{u'},i',\epsilon')=\Theta(\overline u,i,\epsilon)$. This is the sense in which toric promotion with reflections and refractions (and, hence, ordinary toric promotion) is really a ``toric combinatorial billiards system.'' 

\cref{fig:beam} shows a beam of light that reflects and refracts in an arrangement of windows, mirrors, and metalenses determined by the graph $\begin{array}{l}\includegraphics[height=0.353cm]{TorefractPIC7}\end{array}$, along with its projection to the torus $\TT_2$. \cref{fig:start} shows the discretization of this beam of light. 

\begin{figure}[ht]
  \begin{center}
  \includegraphics[height=3.484cm]{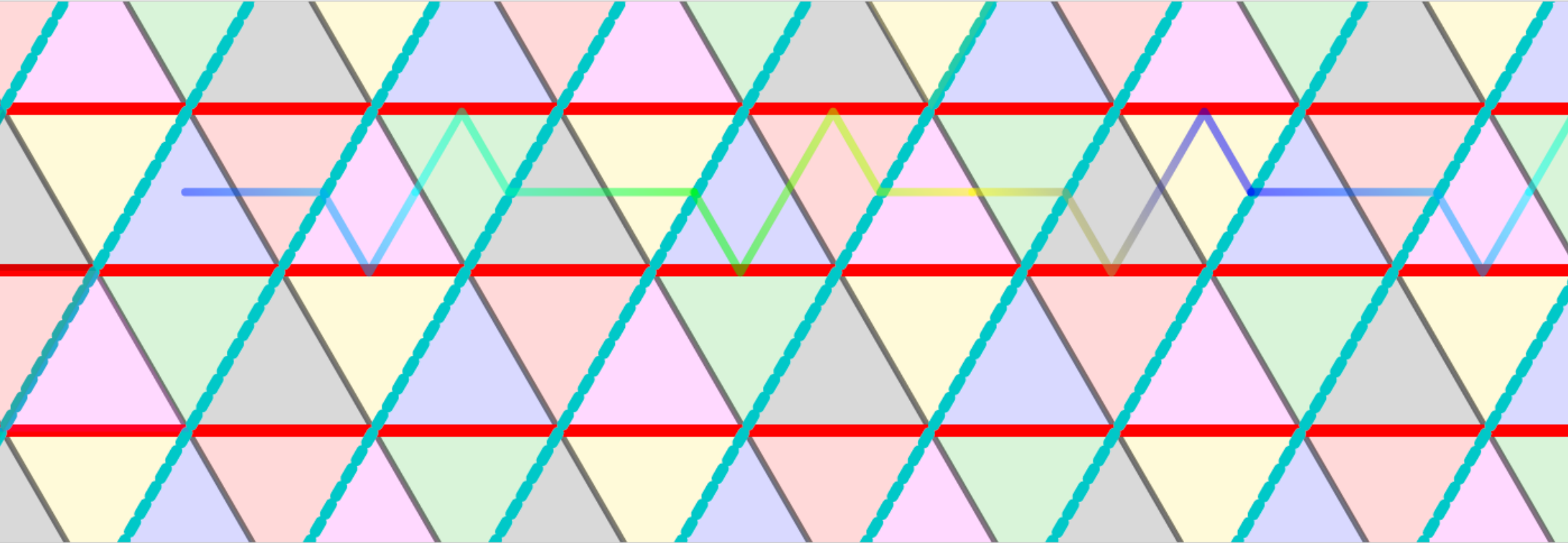}
  \qquad 
  \includegraphics[height=3.484cm]{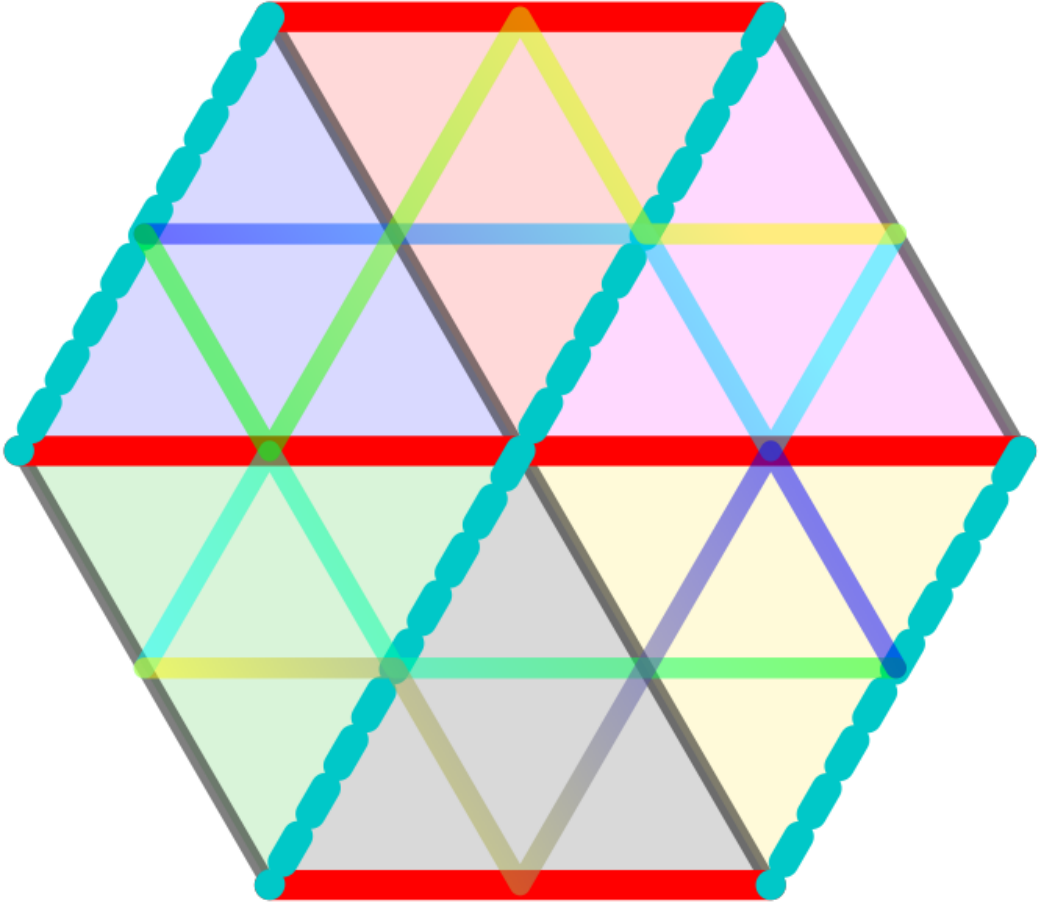} 
  \end{center}
\caption{The image on the left shows a beam of light drawn in a multicolored gradient. The image on the right shows the projection of the beam of light to $\TT_2$. }\label{fig:beam}
\end{figure}

\section{Stones and Coins}\label{sec:coins} 

This brief section presents a use framework, inspired by \cite{DefantPermutoric}, for visualizing the dynamics of toric promotion with reflections and refractions. 

Let $v_1,\ldots,v_n$ be the vertices of $G$. For each vertex $v_k\in V$, we consider a formal symbol $\vv_k$ that we call the \dfn{replica} of $v_k$. Let ${\bf V}=\{\vv_1,\ldots,\vv_n\}$. We represent the labeling $\sigma$ by placing $\vv_k$ on the vertex $\sigma(v_k)$ of $\Cycle_n$ for each $k$. To represent $i$ and $\epsilon$, we also place onto the vertex $i+\frac{1}{2}(1-\epsilon)$ a stone that points clockwise if $\epsilon=1$ and points counterclockwise if $\epsilon=-1$ (so the stone points toward $i+\frac{1}{2}(1+\epsilon)$). This defines the \dfn{stone diagram} of the triple $(\sigma,i,\epsilon)$, which we denote by $\SD(\sigma,i,\epsilon)$. We can also draw the \dfn{coin diagram} of $(\sigma,i,\epsilon)$, which consists of the graph $G$ along with a coin placed on the vertex $\sigma^{-1}(i+\frac{1}{2}(1-\epsilon))$. 

Consider a triple $(\sigma,i,\epsilon)\in\Xi_G$, and let $v_j=\sigma^{-1}(i+\frac{1}{2}(1-\epsilon))$ and $v_k=\sigma^{-1}(i+\frac{1}{2}(1+\epsilon))$. In this situation, we say that $\vv_j$ \dfn{sits on} the stone and that the stone \dfn{points toward} $\vv_k$. When we apply $\Theta$ to $(\sigma,i,\epsilon)$, the stone diagram and coin diagram can change in one of the following ways. 
\begin{itemize}
\item If $\{v_j,v_k\}\not\in E$, then the coin does not move, the replicas $\vv_j$ and $\vv_k$ swap positions, and the stone slides one step in the direction it is pointing. In this case, we imagine that the replica $\vv_j$ rides along with the stone as it slides.
\item If $\{v_j,v_k\}\in\Eflect$, then the coin moves from $v_j$ to $v_k$, and the stone slides from underneath $\vv_j$ to underneath $\vv_k$. 
\item If $\{v_j,v_k\}\in\Efract$, then the coin moves from $v_j$ to $v_k$, the replicas $\vv_j$ and $\vv_k$ swap positions, and the stone reverses its direction.  
\end{itemize}

Define the \dfn{cyclic shift} operator $\cyc\colon\Lambda_G\to\Lambda_G$ by $\cyc(\sigma)(v)=\sigma(v)+1$. We also (slightly abusing notation) define $\cyc\colon\Xi_G\to\Xi_G$ by $\cyc(\sigma,i,\epsilon)=(\cyc(\sigma),i+1,\epsilon)$. We can define $\cyc$ directly on stone diagrams by letting $\cyc(\SD(\sigma,i,\epsilon))=\SD(\cyc(\sigma,i,\epsilon))$. We say a stone diagram $\mathcal D'$ is a \dfn{cyclic rotation} of a stone diagram $\mathcal D$ if $\mathcal D'=\cyc^k(\mathcal D)$ for some integer $k$. 

\section{Forests}\label{sec:forests} 

Assume throughout this section that $G=(V,E)$ is a forest, and fix a partition $E=\Eflect\sqcup\Efract$. Our goal is to prove \cref{thm:forest}, which determines the orbit structure of $\Theta$. 

Let us start with a triple $(\sigma,i,\epsilon)\in\Xi_G$ and repeatedly apply $\Theta$, watching how the associated stone and coin diagrams change over time. Let $(\sigma_t,i_t,\epsilon_t)=\Theta^t(\sigma,i,\epsilon)$. For brevity, let us write $\SD_t=\SD(\sigma_t,i_t,\epsilon_t)$.  

Let $T=(V_T,E_T)$ be the connected component of $G$ containing the vertex $\sigma^{-1}(i+\frac{1}{2}(1-\epsilon))$. At each time step, the coin either does not move or crosses an edge of $G$; therefore, the coin is always on a vertex in $T$. If the coin is on a vertex $v_j$ at time $t$ and is on a different vertex $v_k$ at time $t+1$, then we say the coin moves from $v_j$ to $v_k$ \dfn{at time $t$}. 

As discussed in \cref{sec:intro}, there is a partition $V_T=X_1\sqcup X_{-1}$ such that 
\begin{itemize}
\item every edge in $\Efract\cap E_T$ has one endpoint in $X_1$ and one endpoint in $X_{-1}$; 
\item every edge in $\Eflect\cap E_T$ either has both its endpoints in $X_1$ or has both its endpoints in $X_{-1}$.
\end{itemize}
Let us also assume that $\sigma^{-1}(i_0+\frac{1}{2}(1-\epsilon_0))\in X_{\epsilon_0}$ so that $X_1$ and $X_{-1}$ are uniquely determined. 
Recall that we write $\chi(T;\Eflect,\Efract)=||X_1|-|X_{-1}||$. Because the stone reverses its direction precisely when the coin crosses a refraction edge (and because $T$ is a tree), the direction the stone points is determined by the replica sitting on top of it. Namely, the stone points clockwise whenever the replica of a vertex in $X_1$ sits on it, and it points counterclockwise whenever the replica of a vertex in $X_{-1}$ sits on it. 

Given adjacent vertices $v_j$ and $v_k$ in $T$, let us write $T^{(j,k)}$ for the set of vertices of $T$ that are closer to $v_k$ than to $v_j$. Let $\eta_{j,k}=|T^{(j,k)}|$. 

Consider an edge $\{v_{\ell},v_{\ell'}\}$ in $T$, and let $t$ be a time at which the coin moves from $v_\ell$ to $v_{\ell'}$. Because $\Theta$ is a bijection and $T$ is a tree, there must be some time $t'$ after $t$ at which the coin moves from $v_{\ell'}$ to $v_\ell$. Note that $\SD(s_{i_t}\circ\sigma_t,i_t+\epsilon_t,\epsilon_t)$ is obtained from $\SD_t$ by swapping the positions of $\vv_\ell$ and $\vv_{\ell'}$ and sliding the stone one step in the direction it is pointing. The reader may find it helpful to consult \cref{fig:forest_coins,exam:forest1} while reading the proof of the next lemma. 

\begin{lemma}\label{lem:forest_key_lemma} 
Let $\{v_\ell,v_{\ell'}\}$ be an edge of $T$, and let $t$ be a time at which the coin moves from $v_\ell$ to $v_{\ell'}$. The first time after $t$ at which the coin moves from $v_{\ell'}$ to $v_\ell$ is $t+\eta_{\ell,\ell'}(n-1)$. Moreover, for all $v_j\in T^{(\ell,\ell')}$ and $v_k\in V$ with $j\neq k$, there is a unique time in the interval $[t+1,t+\eta_{\ell,\ell'}(n-1)]$ at which $\vv_j$ sits on the stone and the stone points toward $\vv_k$. In addition, $\SD_{t+\eta_{\ell,\ell'}(n-1)+1}$ is a cyclic rotation of $\SD(s_{i_t}\circ\sigma_t,i_t+\epsilon_t,\epsilon_t)$. \end{lemma}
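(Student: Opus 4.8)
The plan is to track the motion of the coin on the tree $T$ and show that it performs a depth-first-style traversal of the subtree $T^{(\ell,\ell')}$ before it can return across the edge $\{v_\ell,v_{\ell'}\}$. The core mechanism is as follows: at time $t$ the coin has just moved to $v_{\ell'}$, and from the perspective of the subtree $T^{(\ell,\ell')}$ the vertex $v_\ell$ has been ``visited.'' I would argue by induction on $\eta_{\ell,\ell'}=|T^{(\ell,\ell')}|$ (or more precisely on the structure of the subtree hanging off $v_{\ell'}$ away from $v_\ell$), using the single-edge base case: when $v_{\ell'}$ is a leaf of $T^{(\ell,\ell')}$, after the move at time $t$ the coin sits on $v_{\ell'}$ carrying replica $\vv_{\ell'}$ (if $\{v_\ell,v_{\ell'}\}\in\Efract$) or $\vv_\ell$ (if $\{v_\ell,v_{\ell'}\}\in\Eflect$), and the stone with that replica on it must cycle once all the way around $\Cycle_n$ — crossing $n-1$ other replicas and applying $n-1$ steps of $\Theta$ — before the pair $\{i,i+1\}$ again involves the edge $\{v_\ell,v_{\ell'}\}$. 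During those $n-1$ steps the replica on the stone is fixed (it rides along), so by the observation before the lemma it meets every other replica $\vv_k$ exactly once while pointing toward it; this is exactly the ``unique time'' claim for the single vertex $v_j=v_{\ell'}$ in $T^{(\ell,\ell')}$.

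For the inductive step I would consider the edges of $T$ incident to $v_{\ell'}$ other than $\{v_\ell,v_{\ell'}\}$, say $\{v_{\ell'},w_1\},\dots,\{v_{\ell'},w_r\}$, in the cyclic order in which the coin encounters them. Each time the coin is at $v_{\ell'}$ and the transposition $s_i$ pairs the label of $v_{\ell'}$ with the label of some $w_m$, the coin crosses into the subtree $T^{(w_m,\ell')}$, and by the inductive hypothesis applied to the edge $\{v_{\ell'},w_m\}$ it returns to $v_{\ell'}$ after exactly $\eta_{w_m,\ell'}(n-1)$ steps, having in the meantime realized every pair $(\vv_j,\vv_k)$ with $v_j\in T^{(w_m,\ell')}$ exactly once. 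Between and around these excursions, the replica currently on the stone (which represents a vertex in $T^{(\ell,\ell')}\setminus\bigsqcup_m T^{(w_m,\ell')}=\{v_{\ell'}\}$, i.e.\ it is the replica that rides with the coin when the coin is at $v_{\ell'}$) is moving one step per time around $\Cycle_n$; summing, the total elapsed time when the coin finally crosses back from $v_{\ell'}$ to $v_\ell$ is $(n-1)+\sum_{m=1}^r \eta_{w_m,\ell'}(n-1)=\eta_{\ell,\ell'}(n-1)$, since $\{v_{\ell'}\}$ together with the $T^{(w_m,\ell')}$ partitions $T^{(\ell,\ell')}$. The uniqueness statement for $v_j=v_{\ell'}$ follows because the ``home'' replica advances exactly $n-1$ positions net around $\Cycle_n$ over the whole interval (the refraction reversals inside excursions cancel in pairs), so it meets each other replica once; and for $v_j$ in a proper sub-subtree it follows from the inductive hypothesis together with the fact that the excursions into distinct $T^{(w_m,\ell')}$ are disjoint in time.

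Finally, for the last sentence of the lemma I would observe that $\SD(s_{i_t}\circ\sigma_t,\,i_t+\epsilon_t,\,\epsilon_t)$ is the stone diagram obtained from $\SD_t$ by swapping $\vv_\ell,\vv_{\ell'}$ and sliding the stone one notch, i.e.\ it is precisely the stone diagram right after the move at time $t$, namely $\SD_{t+1}$. So the claim is that $\SD_{t+\eta_{\ell,\ell'}(n-1)+1}$ is a cyclic rotation of $\SD_{t+1}$. Between times $t+1$ and $t+\eta_{\ell,\ell'}(n-1)+1$ the coin has made a closed loop in $T$ that enters and exits $T^{(\ell,\ell')}$ only across the edge $\{v_\ell,v_{\ell'}\}$ (once in, once out), so each replica $\vv_j$ with $v_j\in T^{(\ell,\ell')}$ has, by the uniqueness count, been carried by the stone the same net number of times, and the relative cyclic positions of all replicas are preserved up to a global rotation by $\cyc^c$ where $c$ is the net displacement of any fixed replica; one checks $c$ is the same for all of them because the permutation applied to $\sigma$ over this interval is, up to the cyclic shift, the identity on the relevant block (the composition of the transpositions telescopes). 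I expect the main obstacle to be making the bookkeeping of the excursion argument fully rigorous — in particular, verifying carefully that when the coin is at $v_{\ell'}$, the ``next'' edge it crosses is always determined by where the home replica currently sits on $\Cycle_n$, and that the refraction sign-flips along deeper excursions do not disturb the invariant ``the stone's direction is determined by whether its replica's vertex lies in $X_1$ or $X_{-1}$.'' That invariant, already established in the text preceding the lemma, is what keeps the induction consistent, so I would lean on it heavily rather than re-deriving directional information step by step.
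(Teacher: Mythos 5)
Your overall strategy---induction on $\eta_{\ell,\ell'}$, with the coin making depth-first excursions into the subtrees hanging off $v_{\ell'}$, the time count $(n-1)+\sum_m\eta_{w_m,\ell'}(n-1)=\eta_{\ell,\ell'}(n-1)$, and uniqueness coming from the excursions being disjoint in time---is exactly the paper's argument. However, there is a genuine gap in your treatment of the last claim. You assert that $\SD(s_{i_t}\circ\sigma_t,\,i_t+\epsilon_t,\,\epsilon_t)$ ``is precisely the stone diagram right after the move at time $t$, namely $\SD_{t+1}$.'' It is not. That triple is the \emph{pass-through} diagram: the replicas $\vv_\ell,\vv_{\ell'}$ swap \emph{and} the stone slides one step forward, which is what $\Theta$ would do if $\{v_\ell,v_{\ell'}\}$ were not an edge of $G$. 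Since here $\{v_\ell,v_{\ell'}\}$ is an edge, $\SD_{t+1}$ is something else: for a reflection edge the replicas do not swap (only the stone slides), and for a refraction edge the replicas swap but the stone stays put and reverses direction. Moreover, your substitute statement is actually false: already in the base case with $\{v_\ell,v_{\ell'}\}\in\Eflect$, the clockwise cyclic word of replicas in $\SD_{t+n}$ reads $\vv_{\ell'},\vv_\ell,\vv_{c_1},\ldots,\vv_{c_{n-2}}$ while that of $\SD_{t+1}$ reads $\vv_\ell,\vv_{\ell'},\vv_{c_1},\ldots,\vv_{c_{n-2}}$, and these are not cyclic rotations of one another for $n\geq 3$. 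The correct conclusion requires a final case split at time $t+\eta_{\ell,\ell'}(n-1)$ on whether $\{v_\ell,v_{\ell'}\}$ is a reflection or a refraction edge (as the paper does); your remark that ``the composition of the transpositions telescopes'' does not supply this, and as written it argues toward the wrong target diagram.

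A smaller but related slip: after the coin crosses a reflection edge from $v_\ell$ to $v_{\ell'}$, the replica sitting on the stone is $\vv_{\ell'}$ (the stone slides from underneath $\vv_\ell$ to underneath $\vv_{\ell'}$), not $\vv_\ell$ as you state; by definition the replica on the stone always corresponds to the vertex carrying the coin. This does not derail your time count, but it is needed to make the uniqueness claim for $v_j=v_{\ell'}$ come out right in the base case. The rest of your bookkeeping---in particular leaning on the invariant that the stone's direction is determined by whether the vertex of the replica on it lies in $X_1$ or $X_{-1}$---is sound and matches the paper.
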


\begin{proof}
The proof is by induction on $\eta_{\ell,\ell'}$. Assume first that $\eta_{\ell,\ell'}=1$ so that $T^{(\ell,\ell')}=\{v_{\ell'}\}$. Then $v_{\ell'}$ is a leaf of $T$ whose only neighbor is $v_\ell$. At time $t+1$, the stone starts to slide away from $\vv_\ell$, carrying $\vv_{\ell'}$ with it as it slides. The stone slides around $\Cycle_n$ until time $t+n-1$. At time $t+n-1$, the replica $\vv_{\ell'}$ is still sitting on the stone, and the stone is pointing toward $\vv_\ell$. The coin moves from $v_{\ell'}$ to $v_\ell$ at time $t+n-1$. Note that from time $t$ to time $t+n$, the stone either reversed its direction twice (if $\{v_\ell,v_{\ell'}\}\in\Efract$) or zero times (if $\{v_\ell,v_{\ell'}\}\in\Eflect$). Thus, $\epsilon_{t+n}=\epsilon_t$. The rest of the lemma is now straightforward in this case. 

We may now suppose $\eta_{\ell,\ell'}\geq 2$. Note that $v_{\ell'}\in X_{\epsilon_{t+1}}$. We will assume for simplicity that $\epsilon_{t+1}=1$ (so $v_{\ell'}\in X_1$); a virtually identical argument handles the case in which $\epsilon_{t+1}=-1$.
Let $v_{\alpha(1)},\ldots,v_{\alpha(d)}$ be the neighbors of $v_{\ell'}$ in $T$, indexed so that their replicas appear in the clockwise cyclic order $\vv_{\alpha(1)},\ldots,\vv_{\alpha(d)}$ in $\SD_{t+1}$ and so that $\alpha(d)=\ell$. Let us also define $\alpha(0)=\ell$ for simplicity. Let $N[v_{\ell'}]=\{v_{\alpha(1)},\ldots,v_{\alpha(d)},v_{\ell'}\}$ be the closed neighborhood of $v_{\ell'}$ in $T$. For $1\leq j\leq d$, let $\FF_j$ be the set of vertices in $V\setminus N[v_{\ell'}]$ whose replicas appear on the path that goes clockwise from $\vv_{\alpha(j-1)}$ to $\vv_{\alpha(j)}$ in $\SD_{t+1}$. Let ${\FFF}_j$ be the set of replicas of vertices in $\FF_j$. Let $\ff_j=|\FF_j|$. 

Let $t_0=t$. At time $t_0+1$, the stone starts to slide clockwise through the replicas in ${\FFF}_1$. For each $v_k\in \FF_1$, there is a unique time in the interval $[t_0+1,t_0+\ff_1]$ at which $\vv_{\ell'}$ sits on the stone and the stone points toward $\vv_k$. At time $t_0+\ff_1+1$, the coin moves from $v_{\ell'}$ to $v_{\alpha(1)}$. Because $\eta_{\ell',\alpha(1)}<\eta_{\ell,\ell'}$, we can apply our inductive hypothesis to deduce the following. First of all, the first time after $t_0+\ff_1+1$ at which the coin moves from $v_{\alpha(1)}$ to $v_{\ell'}$ is $t_0+\ff_1+\eta_{\ell',\alpha(1)}(n-1)+1$. Second, for all $v_j\in T^{(\ell',\alpha(1))}$ and $v_k\in V$ with $j\neq k$, there is a unique time in the interval ${[t_0+\ff_1+2,t_0+\ff_1+\eta_{\ell',\alpha(1)}(n-1)+1]}$ at which $\vv_j$ sits on the stone and the stone points toward $\vv_k$. Finally, $\SD_{t_0+\ff_1+\eta_{\ell',\alpha(1)}(n-1)+2}$ is a cyclic rotation of 
\begin{equation}\label{eq:proof1}
\SD(s_{t_0+\ff_1+1}\circ\sigma_{t_0+\ff_1+1},i_{t_0+\ff_1+1}+\epsilon_{t_0+\ff_1+1},\epsilon_{t_0+\ff_1+1}).
\end{equation} 
Note that $\epsilon_{t_0+\ff_1+1}=\epsilon_{t_0+1}=\epsilon_{t+1}=1$. Up to cyclic rotation, the stone diagram in \eqref{eq:proof1} is obtained from $\SD_{t_0+1}$ by sliding the stone (along with $\vv_{\ell'}$) clockwise through the replicas in ${\FFF}_1\cup\{\vv_{\alpha(1)}\}$. 

Suppose $d\geq 3$. Let $t_1=t_0+\ff_1+\eta_{\ell',\alpha(1)}(n-1)+1$. At time $t_1+1$, the stone starts to slide clockwise through the replicas in ${\FFF}_2$. For each $v_k\in \FF_2$, there is a unique time in the interval $[t_1+1,t_1+\ff_2]$ at which $\vv_{\ell'}$ sits on the stone and the stone points toward $\vv_k$. At time $t_1+\ff_2+1$, the coin moves from $v_{\ell'}$ to $v_{\alpha(2)}$. Because $\eta_{\ell',\alpha(2)}<\eta_{\ell,\ell'}$, we can again apply induction. We find that the first time after $t_1+\ff_2+1$ at which the coin moves from $v_{\alpha(2)}$ to $v_{\ell'}$ is ${t_1+\ff_2+\eta_{\ell',\alpha(2)}(n-1)+1}$. Also, for all $v_j\in T^{(\ell',\alpha(2))}$ and $v_k\in V$ with $j\neq k$, there is a unique time in the interval ${[t_1+\ff_2+2,t_1+\ff_2+\eta_{\ell',\alpha(2)}(n-1)+1]}$ at which $\vv_j$ sits on the stone and the stone points toward $\vv_k$. Finally, $\SD_{t_1+\ff_2+\eta_{\ell',\alpha(2)}(n-1)+2}$ is a cyclic rotation of  
\begin{equation}\label{eq:proof2}\SD(s_{t_1+\ff_2+1}\circ\sigma_{t_1+\ff_2+1},i_{t_1+\ff_2+1}+\epsilon_{t_1+\ff_2+1},\epsilon_{t_1+\ff_2+1}).
\end{equation} Now, $\epsilon_{t_1+\ff_2+1}=1$. Up to cyclic rotation, the stone diagram in \eqref{eq:proof2} is obtained from the stone diagram at time $t_1+1$ by sliding the stone (along with $\vv_{\ell'}$) clockwise through the replicas in ${\FFF}_2\cup\{\vv_{\alpha(2)}\}$. This, in turn, is a cyclic rotation of the diagram obtained from $\SD_{t_0+1}$ by sliding the stone (along with $\vv_{\ell'}$) clockwise through the replicas in ${\FFF}_1\cup{\FFF}_2\cup\{\vv_{\alpha(1)},\vv_{\alpha(2)}\}$.  

We can continue in this manner, at each step defining 
\begin{equation}\label{eq:proof3}
t_r=t_{r-1}+\ff_r+\eta_{\ell',\alpha(r)}(n-1)+1.
\end{equation} 
Because $\ell=\alpha(d)$, we eventually find that the first time after $t$ at which the coin moves from $v_{\ell'}$ to $v_{\ell}$ is $t_{d-1}+\ff_d+1$. Solving the recurrence in \eqref{eq:proof3} yields that 
\begin{samepage}
\begin{align*}
t_{d-1}+\ff_d+1&=t+(\ff_1+\cdots+\ff_d)+(\eta_{\ell',\alpha(1)}+\cdots+\eta_{\ell',\alpha(d-1)})(n-1)+d \\ 
&=t+|V\setminus N[v_{\ell'}]|+(\eta_{\ell',\alpha(1)}+\cdots+\eta_{\ell',\alpha(d-1)})(n-1)+d \\ 
&= t+n-1+(\eta_{\ell',\alpha(1)}+\cdots+\eta_{\ell',\alpha(d-1)})(n-1) \\ 
&=t+n-1+\left\lvert T^{(\ell',\alpha(1))}\cup\cdots\cup T^{(\ell',\alpha(d-1))}\right\rvert(n-1) \\ 
&= t+n-1+\left\lvert T^{(\ell,\ell')}\setminus\{v_{\ell'}\}\right\rvert(n-1) \\ 
&=t+\eta_{\ell,\ell'}(n-1). 
\end{align*}
\end{samepage}
This proves the first statement of the lemma, and the second statement follows from the above analysis of how the stone diagrams evolve from time $t+1$ to time $t+\eta_{\ell,\ell'}(n-1)$. It also follows from this analysis that, up to cyclic rotation, $\SD_{t+\eta_{\ell,\ell'}(n-1)}$ is obtained from $\SD_{t+1}$ by sliding the stone (along with $\vv_{\ell'}$) clockwise through the replicas in the set \[{\FFF}_1\cup\cdots\cup{\FFF}_{d-1}\cup{\FFF}_d\cup\{\vv_{\alpha(1)},\ldots,\vv_{\alpha(d-1)}\}={\bf V}\setminus\{\vv_{\ell},\vv_{\ell'}\}.\] To prove the last statement of the lemma, we consider two cases, which depend on whether $\{v_\ell,v_{\ell'}\}$ is a reflection edge or a refraction edge. 

Suppose first that $\{v_\ell,v_{\ell'}\}\in\Eflect$. In this case, $\SD_{t+\eta_{\ell,\ell'}(n-1)+1}$ is obtained from $\SD_{t+\eta_{\ell,\ell'}(n-1)}$ by sliding the stone one step clockwise so that it slides from underneath $\vv_{\ell'}$ to underneath $\vv_{\ell}$. Thus, $\SD_{t+\eta_{\ell,\ell'}(n-1)}+1$ is a cyclic rotation of $\SD(s_{i_t}\circ\sigma_t,i_t+\epsilon_t,\epsilon_t)$, as desired. 

Now suppose that $\{v_\ell,v_{\ell'}\}\in\Efract$. In this case, $\SD_{t+\eta_{\ell,\ell'}(n-1)+1}$ is obtained from $\SD_{t+\eta_{\ell,\ell'}(n-1)}$ by swapping the positions of $\vv_{\ell}$ and $\vv_{\ell'}$ and reversing the direction of the arrow. Once again, $\SD_{t+\eta_{\ell,\ell'}(n-1)}+1$ is a cyclic rotation of $\SD(s_{i_t}\circ\sigma_t,i_t+\epsilon_t,\epsilon_t)$. 
\end{proof} 

\begin{figure}[ht]
  \begin{center}
  \includegraphics[width=\linewidth]{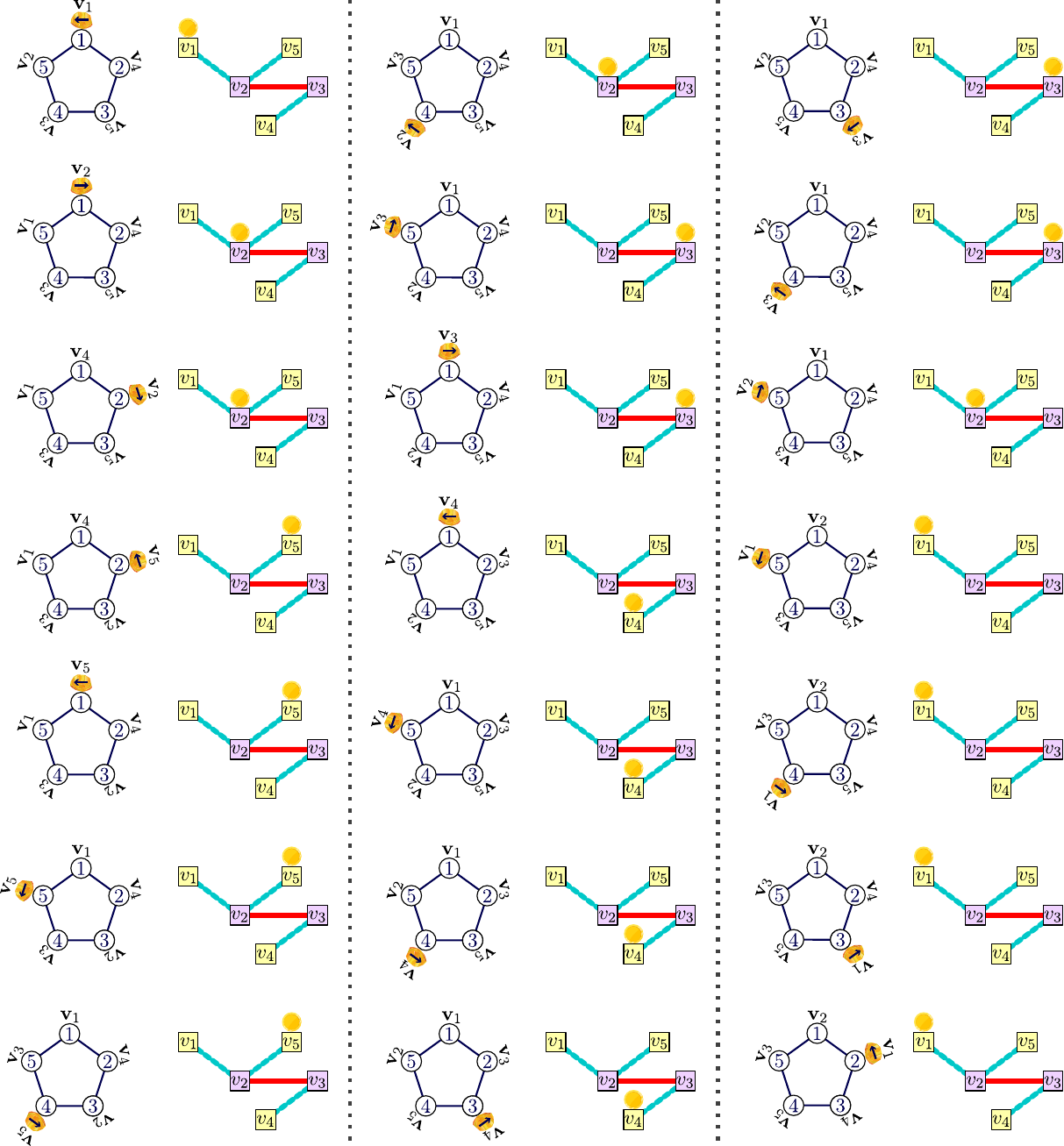}
  \end{center}
\caption{The stone and coin diagrams of triples $(\sigma_t,i_t,\epsilon_t)$ for $0\leq t\leq 20$ (read from top to bottom down each column). Here, $G$ is a tree with $5$ vertices whose only reflection edge is $\{v_2,v_3\}$. In each coin diagram, elements of $X_1$ are indicated in {\color{Purple}purple}, while elements of $X_{-1}$ are indicated in {\color{Yellow}yellow}. }\label{fig:forest_coins}
\end{figure} 

\begin{example}\label{exam:forest1}
\cref{fig:forest_coins} shows stone and coin diagrams that evolve from time $0$ to time $20$. Let $\ell=1$ and $\ell'=2$, and let $t=0$. Then $G=T$, and we have $X_1=\{v_2,v_3\}$ and $X_{-1}=\{v_1,v_4,v_5\}$. In the notation of the proof of \cref{lem:forest_key_lemma}, we have $\eta_{\ell,\ell'}=4$ and $d=3$. Also, \[\alpha(1)=5,\quad\alpha(2)=3,\quad\alpha(0)=\alpha(3)=1,\quad \FF_1=\{v_4\},\quad \FF_2=\varnothing,\quad \FF_3=\varnothing.\] Note that the first time after $t=0$ at which the coin moves from $v_2$ to $v_1$ is $t+\eta_{\ell,\ell'}(n-1)=16$. Moreover, $\SD_{17}$ is equal to (hence, is a cyclic rotation of) $\SD(s_{i_0}\circ\sigma_{0},i_0+\epsilon_0,\epsilon_0)$. 
\end{example}

Now, let $\{v_\ell,v_{\ell'}\}$ be an edge of $T$, and assume that $v_{\ell}$ is a leaf of $T$. Then $\eta_{\ell,\ell'}=|V_T|-1$. It follows from the proof of \cref{lem:forest_key_lemma} that there is a time $t^*$ at which the coin moves from $v_\ell$ to $v_{\ell'}$. \cref{lem:forest_key_lemma} tells us that the first time after $t^*$ at which the coin moves from $v_{\ell'}$ to $v_\ell$ is $t^*+(|V_T|-1)(n-1)$. The lemma also tells us that $\SD_{t^*+(|V_T|-1)(n-1)+1}$ is a cyclic rotation of $\SD(s_{i_{t^*}}\circ\sigma_{t^*},i_{t^*}+\epsilon_{t^*},\epsilon_{t^*})$. At time $t^*+(|V_T|-1)(n-1)+1$, the stone starts to slide away from $\vv_{\ell'}$, carrying $\vv_{\ell}$ with it as it slides. The stone slides around $\Cycle_n$ until time $t^*+|V_T|(n-1)$. At time $t^*+|V_T|(n-1)$, the replica $\vv_{\ell}$ is sitting on the stone, and the stone is pointing toward $\vv_{\ell'}$. It follows that $|V_T|(n-1)$ is the smallest positive integer $r$ such that $\SD_{t^*+r}$ is a cyclic rotation of $\SD_{t^*}$. Let $\delta$ be the smallest nonnegative integer such that $\SD_{t^*+|V_T|(n-1)}=\cyc^\delta(\SD_{t^*})$. Repeating the preceding argument, we find that $\SD_{t^*+K|V_T|(n-1)}=\cyc^{K\delta}(\SD_{t^*})$ for every positive integer $K$. Thus, the orbit of $\Theta$ containing $(\sigma_{t^*},i_{t^*},\epsilon_{t^*})$, which is also the orbit containing $(\sigma_0,i_0,\epsilon_0)$, has size $|V_T|n(n-1)/\gcd(n,\delta)$. To complete the proof of \cref{thm:forest}, we just need to show that 
\begin{equation}\label{eq:delta}
\gcd(n,\delta)=\gcd(n,\chi(T;\Eflect,\Efract))
\end{equation} 

It follows from \cref{lem:forest_key_lemma} and the previous paragraph that for all $v_j\in V_T$ and $v_k\in V$ with $j\neq k$, there is a unique time $\gamma(j,k)\in[t^*,t^*+|V_T|(n-1)-1]$ at which $\vv_j$ sits on the stone and the stone points toward $\vv_k$. The stone moves clockwise at time $\gamma(j,k)$ if and only if $v_{j}\in X_1$ and $\{v_j,v_k\}\not\in\Efract$. Since each refraction edge in $T$ has one endpoint in $X_1$ and one endpoint in $X_{-1}$, the number of times in the interval $[t^*,t^*+|V_T|(n-1)-1]$ when the stone moves clockwise is $|X_1|(n-1)-|\Efract\cap E_T|$. Similarly, the number of times in the interval $[t^*,t^*+|V_T|(n-1)-1]$ when the stone moves counterclockwise is $|X_{-1}|(n-1)-|\Efract\cap E_T|$. Therefore, 
\begin{equation}\label{eq:delta_congruence}
\delta\equiv (|X_{1}|(n-1)-|\Efract\cap E_T|)-(|X_{-1}|(n-1)-|\Efract\cap E_T|)=-(|X_1|-|X_{-1}|)\pmod{n}.
\end{equation} 
This implies that \eqref{eq:delta} holds, which completes proof of \cref{thm:forest}.  

\begin{example}\label{exam:forest2}
Let us set $\ell=1$ and $\ell'=2$ in the example shown in \cref{fig:forest_coins}. Here, we have $G=T$ and $|V_T|=5$. The coin moves from $v_\ell$ to $v_{\ell'}$ at time $t^*=0$. The first time after $0$ at which the stone diagram is a cyclic rotation of $\SD_0$ is $t^*+|V_T|(n-1)=20$. Furthermore, $\SD_{20}$ is obtained by rotating $\SD_0$ clockwise by $\delta=1$ step. Since $X_1=\{v_2,v_3\}$ and $X_{-1}=\{v_1,v_4,v_5\}$, we have $\delta\equiv -(|X_1|-|X_{-1}|)\pmod{n}$, as stated in \eqref{eq:delta_congruence}.  
\end{example}

\section{Cycles}\label{sec:cycles} 

We now assume $G$ is a cycle graph with $n$ vertices and an even number of refraction edges. 

Start with a triple $(\sigma,1,1)\in\Xi_G$. Assume $G$ is embedded in the plane so that $v_1,\ldots,v_n$ is the clockwise cyclic ordering of $V$. As we iteratively apply $\Theta$, the coin always moves in one direction (either clockwise or counterclockwise) around $G$. By renaming the vertices of $G$ and choosing a different embedding into the plane if necessary, we may assume that $\sigma(v_n)=1$, that $\sigma(v_1)<\sigma(v_{n-1})$ when we identify $\Z/n\Z$ with $[n]$ in the obvious manner, and that the coin moves clockwise. 

Consider the stone diagram $\SD(\sigma,1,1)$. Let $a_0$ be $1$ more than the number of replicas other than $\vv_n$ (not including $\vv_{n-1}$ or $\vv_1$) that we cross while walking clockwise from $\vv_{n-1}$ to $\vv_1$. For $1\leq k\leq n-2$, let $a_k$ be $1$ more than the number of replicas other than $\vv_n$ that we cross while walking clockwise from $\vv_{k}$ to $\vv_{k+1}$. Now construct an infinite sequence $(a_\beta)_{\beta\geq 0}$ by declaring that $a_{j+n-1}=a_j$ for all $j\geq 0$. Let $p_\sigma$ be the period of this sequence, and let $m_\sigma$ be the positive integer such that $\sum_{\ell=0}^{n-2}a_\ell=m_\sigma(n-1)$. Note that 
\begin{equation}\label{eq:sum_of_as}
\sum_{\ell=k}^{p_\sigma R+k-1}a_{\ell}=p_\sigma m_\sigma R
\end{equation} for all nonnegative integers $R$ and $k$.
Because $G$ has an even number of refraction edges, there is a unique partition $V=Y_1\sqcup Y_{-1}$ of the vertex set of $G$ such that 
\begin{itemize}
\item every refraction edge has one endpoint in $Y_1$ and one endpoint in~$Y_{-1}$; 
\item every reflection edge either has both its endpoints in $Y_1$ or has both its endpoints in~$Y_{-1}$; 
\item $v_n\in Y_1$.
\end{itemize}
Let $\mu_\sigma=|Y_1|$. The stone points clockwise when the replica of an element of $Y_1$ sits on it, and it points counterclockwise when the replica of an element of $Y_{-1}$ sits on it.  

Our goal is to show that the size of the orbit of $\Theta$ containing $(\sigma,1,1)$ is given by the formula in \cref{thm:cycle}. 

Let $b$ be the smallest nonnegative integer such that $\sigma(v_{n-1})\equiv -b\pmod{n}$. Then $\Theta^{-b}(\sigma,1,1)$ has the form $(\sigma',1-b,1)$, and $\SD(\sigma',1-b,1)$ is obtained from $\SD(\sigma,1,1)$ by sliding the stone along with $\vv_n$ counterclockwise $b$ spaces. If we define the sequence $(a_\beta)_{\beta\geq 0}$ using the stone diagram $\SD(\sigma',1-b,1)$, then we get the same result as when we defined it using $\SD(\sigma,1,1)$. Therefore, replacing $(\sigma,1,1)$ with $(\sigma',1-b,1)$ if necessary, we may assume that $\sigma(v_{n-1})=\sigma(v_n)-1$. Let $(\sigma_t,i_t,\epsilon_t)=\Theta^{t-1}(\sigma,1,1)$. Then the coin moves from $v_{n-1}$ to $v_n$ at time $0$.

Let us consider the indices of the vertices of $G$ and their replicas modulo $n$ by declaring that $v_{j+n}=v_j$ and $\vv_{j+n}=\vv_j$. Let $t_0^*<t_1^*<\cdots$ be the nonnegative times at which the coin moves. Thus, the coin moves from $v_{k-1}$ to $v_k$ at time $t_k^*$. Note that $t_0^*=0$. Let $Z$ be the set of nonnegative integers $j$ such that $v_j\in Y_1$. Let us write $Z=\{z_0<z_1<\cdots\}$. In particular, $z_0=0$. 

Let $\mathcal D_0=\SD_1$. Let $\mathcal D_1$ be the stone diagram obtained from $\mathcal D_0$ by sliding the stone with $\vv_0$ clockwise through $a_0-1$ replicas and then sliding the stone from underneath $\vv_0$ to underneath $\vv_1$. Let us recursively define $\mathcal D_r$ to be the stone diagram obtained from $\mathcal D_{r-1}$ by sliding the stone with $\vv_{r-1}$ clockwise through $a_{r-1}-1$ replicas and then sliding the stone from underneath $\vv_{r-1}$ to underneath $\vv_r$. (See \cref{fig:cycle_example}.) 

\begin{lemma}\label{lem:cycle}
For each nonnegative integer $M$, we have $\mathcal D_{p_\sigma Mn}=\SD_1$.  
\end{lemma}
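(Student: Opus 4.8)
The plan is to track how the abstract stone diagrams $\mathcal D_r$ relate to the actual sequence of stone diagrams $\SD_t$ produced by iterating $\Theta$, and to show that the two are synchronized at the moments when the coin moves. Concretely, I would first establish the following claim: for each $r \geq 0$, the diagram $\mathcal D_r$ is a cyclic rotation of $\SD_{t_r^*+1}$, where $t_r^*$ is the $r$-th coin-move time (with the indices $v_j$, $\vv_j$ read mod $n$ as in the setup). The base case $\mathcal D_0 = \SD_1 = \SD_{t_0^*+1}$ holds since $t_0^* = 0$. For the inductive step, I would analyze what $\Theta$ does between time $t_r^*+1$ and time $t_{r+1}^*+1$: the replica $\vv_r$ sits on the stone at time $t_r^*+1$ (the coin just moved onto $v_r$), the stone then slides clockwise past exactly $a_r - 1$ other replicas — this is precisely the content of the definitions of the $a_k$'s, since $a_r-1$ counts the replicas strictly between $\vv_r$ and $\vv_{r+1}$ going clockwise, not counting $\vv_n$ — carrying $\vv_r$ along (each intermediate replica corresponds to a non-edge of $G$, hence a transposition move), until $\vv_r$ is adjacent to $\vv_{r+1}$. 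At that point the edge $\{v_r, v_{r+1}\}$ of $G$ is processed: if it is a reflection edge the stone slides from under $\vv_r$ to under $\vv_{r+1}$ with no swap; if it is a refraction edge the replicas $\vv_r, \vv_{r+1}$ swap and the stone reverses. In both cases the resulting diagram is, up to cyclic rotation, exactly the operation used to pass from $\mathcal D_r$ to $\mathcal D_{r+1}$. (The one subtlety: when $\vv_r = \vv_n$, i.e. $r \equiv n \pmod n$, the stone-and-$\vv_n$ bookkeeping has to be checked against the special role $\vv_n$ plays, but the definition of the $a_k$'s was set up precisely so that $\vv_n$ is never counted, so $\vv_n$ simply rides along as the "anchor.")

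Second, I would use the sign/orientation information. The orientation $\epsilon$ at time $t_r^*+1$ is determined by whether $v_r \in Y_1$ or $v_r \in Y_{-1}$, by the last sentence before the lemma. The key quantitative point is that over a block of $p_\sigma$ consecutive coin moves the net rotation accumulates in a controlled way. I would compute the total clockwise displacement of the stone from time $t_r^*+1$ to time $t_{r+p_\sigma}^*+1$: summing the $a_\ell$ over a full period of length $p_\sigma$ gives $p_\sigma m_\sigma$ by \eqref{eq:sum_of_as}, and since the sequence $(a_\beta)$ has period $p_\sigma$ the configuration of replicas (as a cyclic word) returns to a cyclic rotation of itself. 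The point of going $p_\sigma n$ steps rather than $p_\sigma$ steps is that after $p_\sigma$ coin-moves the diagram $\mathcal D_{p_\sigma}$ is $\mathcal D_0$ rotated by some fixed amount $c$ and with $\vv_0$ possibly replaced in its role by a different $\vv_j$; iterating $n$ times brings the "labeled" anchor $\vv_0$ back to itself (the indices cycle mod $n$), and I must check the net rotation $nc$ combined with the orientation return is trivial, i.e. $\mathcal D_{p_\sigma n} = \mathcal D_0 = \SD_1$ on the nose, not merely up to rotation. This will follow because $\sum_{\ell=0}^{p_\sigma n - 1} a_\ell = p_\sigma m_\sigma n$ counts a displacement that is a multiple of... — here I would verify that one full pass of all $n$ replicas $\vv_0, \dots, \vv_{n-1}$ around $\Cycle_n$ returns the stone to its exact starting position, which is exactly $n-1$ "gaps" summing to $m_\sigma(n-1)$, a multiple of $n-1$, so an integer number of full loops.

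Third, assemble: iterating the relation $\mathcal D_{r+n}$ vs $\mathcal D_r$ and the period-$p_\sigma$ structure, conclude $\mathcal D_{p_\sigma M n} = \mathcal D_0 = \SD_1$ for every nonnegative integer $M$.

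The main obstacle I expect is the careful bookkeeping of cyclic rotations versus literal equality: each individual step of the $\mathcal D_r$ recursion is only defined "up to cyclic rotation" when one matches it to $\SD_t$, and one must show that after $p_\sigma n$ coin-moves the accumulated rotation is exactly $0$ modulo $n$ (not just that the unlabeled cyclic word matches). Nailing down that the net displacement over a full index-cycle of the anchor replica is an integer multiple of $n$ — using $\sum a_\ell = m_\sigma(n-1)$ together with the fact that the replicas traversed form a complete set of $n-1$ residues — is the delicate computational heart of the argument, and it is where the precise (slightly awkward) definitions of $a_0$ and the "exclude $\vv_n$" convention earn their keep.
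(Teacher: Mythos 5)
There is a genuine gap, and it begins with a misreading of what the lemma asserts. The statement $\mathcal D_{p_\sigma Mn}=\SD_1$ is a self-contained claim about the abstract recursion defining the diagrams $\mathcal D_r$, in which the stone always slides \emph{clockwise}; it makes no reference to the actual $\Theta$-trajectory. Most of your first paragraph is devoted to relating $\mathcal D_r$ to $\SD_{t_r^*+1}$, which is not needed for the lemma and is moreover false as you state it: when $v_r\in Y_{-1}$, the stone in the true trajectory points and slides counterclockwise (through $n-1-a_{r-1}$ replicas), so $\SD_{t_r^*+1}$ cannot be a cyclic rotation of $\mathcal D_r$, since $\cyc$ preserves the stone's direction. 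The correct comparison, recorded separately in \eqref{eq:calD}, holds only at the times indexed by $Y_1$ and with an explicitly computed rotation offset; it is used \emph{after} the lemma, not to prove it.

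On the part of the argument that actually matters, you verify only that the stone returns to its starting vertex carrying $\vv_0$, via $\sum_{\ell=0}^{p_\sigma Mn-1}a_\ell=p_\sigma m_\sigma Mn\equiv 0\pmod n$. That is necessary but not sufficient: the remaining $n-1$ replicas could a priori be permuted among the other vertices of $\Cycle_n$. The missing idea is a per-replica displacement count. In passing from $\mathcal D_0$ to $\mathcal D_{p_\sigma Mn}$, the replica $\vv_0$ moves clockwise only while riding the stone, namely $a_{\ell n}-1$ steps during each transformation $\mathcal D_{\ell n}\to\mathcal D_{\ell n+1}$, for a total of $\sum_{\ell=0}^{p_\sigma M-1}(a_\ell-1)=p_\sigma M(m_\sigma-1)$ clockwise steps by \eqref{eq:sum_of_as}; and it moves one step counterclockwise each time the stone slides through it, which happens exactly $m_\sigma-1$ times per block of $n$ transformations, hence $p_\sigma M(m_\sigma-1)$ times in total. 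The exact cancellation of these two tallies (and the analogous tally for each $\vv_j$) is the entire content of the lemma, and your proposal does not contain it. Your intermediate claim that $\mathcal D_{p_\sigma}$ is a rotation of $\mathcal D_0$ with $\vv_0$ replaced in its role by another replica is asserted rather than proved, and establishing it would itself require essentially this computation.
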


\begin{proof}
We can view $\mathcal D_{p_\sigma Mn}$ as the stone diagram of a triple $(\sigma',i',\epsilon')\in\Xi_G$. The replica $\vv_0$ sits on the stone and the stone points clockwise in $\mathcal D_{p_\sigma Mn}$. Therefore, we just need to show that $\sigma'=\sigma$. 

Consider how the replica $\vv_0$ moves when we transform $\mathcal D_{r-1}$ into $\mathcal D_r$. If $r-1=\ell n$ for some nonnegative integer $\ell$, then $\vv_0$ rides on the stone $a_{\ell n}-1$ steps clockwise; otherwise, $\vv_0$ does not ride on the stone. Overall, the net amount that $\vv_0$ rides clockwise on the stone when we change $\mathcal D_0$ to $\mathcal D_{p_\sigma Mn}$ is \[\sum_{\ell=0}^{p_\sigma M-1}(a_{\ell n}-1)=\sum_{\ell =0}^{p_\sigma M-1}(a_{\ell}-1)=p_\sigma M(m_\sigma-1),\] where we have used \eqref{eq:sum_of_as} (with $R=M$ and $k=0$) and the fact that $a_{\ell n}=a_\ell$. Note, however, that $\vv_0$ can also move when it is not riding on the stone. Namely, $\vv_0$ will move $1$ step counterclockwise whenever the stone slides through it. This occurs $m_\sigma-1$ times when we transform $\mathcal D_{\ell n}$ into $\mathcal D_{(\ell+1)n}$. Therefore, the total number of times that the stone slides through $\vv_0$ when we transform $\mathcal D_0$ into $\mathcal D_{p_\sigma Mn}$ is $p_\sigma M(m_\sigma-1)$. It follows that the position of $\vv_0$ in $\mathcal D_{p_\sigma Mn}$ is the same as the position of $\vv_0$ in the diagram $\mathcal D_0=\SD_1$. That is, $\sigma'(v_0)=\sigma(v_0)$. A similar argument shows that $\sigma'(v_j)=\sigma(v_j)$ for every vertex $v_j$. 
\end{proof} 

Let us now return to analyzing how the stone and coin diagrams of the triples $(\sigma_t,i_t,\epsilon_t)$ evolve over time. We have $z_0=0$ and $t_{z_0}^*=t_0^*=0$. At time $t_{z_0}^*+1$, the coin sits on $v_{z_0}$, so the replica $\vv_{z_0}$ sits on the stone. Let us watch what happens to the stone diagrams from time $t_{z_0}^*+1$ to time $t_{z_{1}}^*$. First, the stone slides clockwise along with $\vv_{z_0}$ through $a_{z_0}-1$ replicas. At time $t_{z_0}^*+a_{z_0}$, the coin moves from $v_{z_0}$ to $v_{z_0+1}$; thus $t_{z_0+1}^*=t_{z_0}^*+a_{z_0}$. If $z_{0}+1<z_{1}$, then at time $t_{z_0+1}^*$, the replicas $\vv_{z_0}$ and $\vv_{z_0+1}$ swap positions, and the stone reverses direction; the stone then slides counterclockwise with $\vv_{z_0+1}$ through $n-1-a_{z_0+1}$ replicas. If $z_0+2<z_{1}$, then at time $t_{z_0+2}^*=t_{z_0+1}^*+(n-1+a_{z_0+1})$, the stone slides from underneath $\vv_{z_0+1}$ to underneath $\vv_{z_0+2}$; the stone then slides counterclockwise with $\vv_{z_{0}+2}$ through $n-1-a_{z_0+2}$ replicas until the coin moves again at time $t_{z_0+3}^*=t_{z_0+2}^*+(n-1-a_{z_0+2})$. This continues until the stone eventually slides counterclockwise with $\vv_{z_{1}-1}$ until reaching $\vv_{z_{1}}$. At time $t_{z_{1}}^*$, the coin moves from $v_{z_{1}-1}$ to $v_{z_{1}}$. At time $t_{z_{1}}^*+1$, the replica $\vv_{z_{1}}$ sits on the stone, and the stone once again points clockwise. It follows from this analysis that $t_{z_1}^*=t_{z_0}^*+a_{z_0}+\sum_{\beta=z_0+1}^{z_1-1}(n-1-a_{\beta})$. Moreover, it is straightforward to check that $\SD_{t_{z_1}^*+1}=\cyc^{z_1-1}(\mathcal D_{z_1})$. 

The preceding paragraph generalizes if we simply replace $z_0$ and $z_1$ everywhere by $z_r$ and $z_{r+1}$ (for $r$ a nonnegative integer). In summary, we find that \begin{equation}\label{eq:t_r_recurrence}
t_{z_r+1}^*=t_{z_r}^*+a_{z_r}+\sum_{\beta=z_r+1}^{z_{r+1}-1}(n-1-a_{\beta})
\end{equation} and that 
\begin{equation}\label{eq:calD}
\SD_{t_{z_{r+1}}^*+1}=\cyc^{z_{r+1}-r-1}(\mathcal D_{z_{r+1}}).
\end{equation} 

\begin{figure}[ht]
  \begin{center}
  \includegraphics[width=\linewidth]{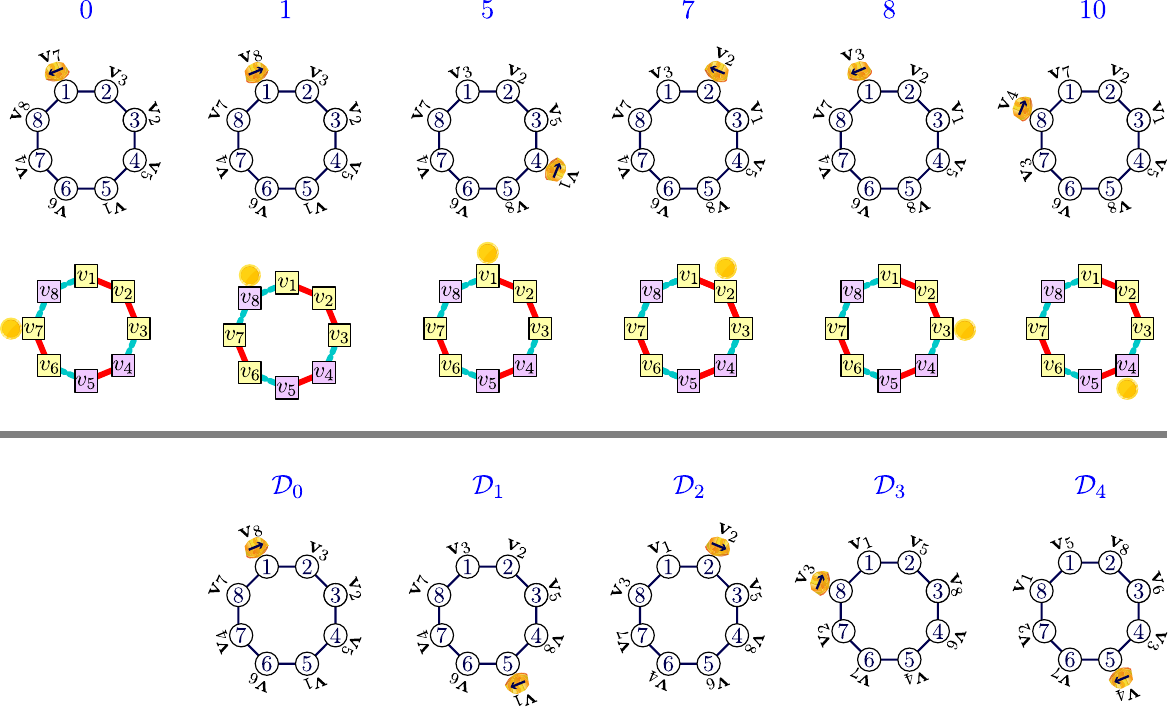}
  \end{center}
\caption{On the top are stone diagrams and coin diagrams at times $0,1,5,7,8,10$. On the bottom are stone diagrams $\mathcal D_0,\mathcal D_1,\mathcal D_2,\mathcal D_3,\mathcal D_4$. In each coin diagram, elements of $Y_1$ are indicated in {\color{Purple}purple}, while elements of $Y_{-1}$ are indicated in {\color{Yellow}yellow}.}\label{fig:cycle_example}
\end{figure} 

\begin{example}
The top part of \cref{fig:cycle_example} shows stone and coin diagrams at times $0,1,5,7,8,10$. The stone diagram at time $1$ is $\SD_1=\SD(\sigma,1,1)$. In $G$, the reflection edges are $\{v_1,v_2\}$, $\{v_2,v_3\}$, $\{v_4,v_5\}$, $\{v_6,v_7\}$, and the refraction edges are $\{v_3,v_4\},\{v_5,v_6\},\{v_7,v_8\},\{v_8,v_1\}$. We have ${Y_1=\{v_4,v_5,v_8\}}$ and $Y_{-1}=\{v_1,v_2,v_3,v_6,v_7\}$, so $\mu_\sigma=|Y_1|=3$. The sequence $(a_\beta)_{\beta\geq 0}$ starts with $4,5,6,5,4,2,2$ and has period $p_\sigma=7$. Since $\sum_{\ell=0}^6a_\ell=28=4(n-1)$, we find that $m_\sigma=4$. We have $z_0=0$ and $z_1=4$. Also, $t_0^*=0$, $t_1^*=t_0^*+a_0=4$, $t_2^*=t_1^*+(n-1-a_1)=6$, $t_3^*=t_2^*+(n-1-a_2)=7$, and $t_4^*=t_3^*+(n-1-a_3)=9$. In particular, \[t_{z_1}^*=t_4^*=0+4+(n-1-5)+(n-1-6)+(n-1-5)=t_{z_0}^*+a_{z_0}+\sum_{\beta=z_0+1}^{z_1-1}(n-1-a_\beta).\] The bottom part of \cref{fig:cycle_example} shows the stone diagrams $\mathcal D_0,\mathcal D_1,\mathcal D_2,\mathcal D_3,\mathcal D_4$. Observe that we have $\SD_{t_{z_1}^*+1}=\SD_{10}=\cyc^{3}(\mathcal D_{z_1})$. 
\end{example}

Let $M$ be a positive integer. Since $v_{p_\sigma Mn}=v_0\in Y_1$, we know that $p_\sigma Mn\in Z$. In fact, ${p_\sigma Mn=z_{p_\sigma M\mu_\sigma}}$. Setting $r=p_\sigma M\mu_\sigma-1$ in \eqref{eq:calD} and invoking \cref{lem:cycle}, we find that 
\begin{equation}\label{eq:MM}
\SD_{t_{p_\sigma Mn}^*+1}=\cyc^{p_\sigma M(n-\mu_\sigma)}(\mathcal D_{p_\sigma Mn})=\cyc^{-p_\sigma M\mu_\sigma}(\mathcal D_{p_\sigma Mn})=\cyc^{-p_\sigma M\mu_\sigma}(\SD_1). 
\end{equation} 

Let $\mathrm{T}$ be the size of the orbit of $\Theta$ containing $(\sigma,1,1)$. Then $\SD_{\mathrm{T}}=\SD_0$, so the coin must move from $v_{n-1}$ to $v_n$ at time $\mathrm{T}$. Thus, $\mathrm{T}=t_{Kn}^*$ for some positive integer $K$. Note that $Kn$ must be divisible by the period $p_\sigma$ of the sequence $(a_\beta)_{\beta\geq 0}$. Since $p_\sigma$ divides $n-1$, it must be coprime to $n$, so $K$ must be divisible by $p_\sigma$. Thus, $K=p_\sigma M$, where $M$ is the smallest positive integer such that $\SD_{t_{p_\sigma Mn}^*+1}=\SD_1$. It follows from \eqref{eq:MM} that $M=n/\gcd(n,p_\sigma\mu_\sigma)$. Because $p_\sigma$ is coprime to $n$, we have $M=n/\gcd(n,\mu_\sigma)$. In summary, we find that $\mathrm{T}=t_L^*$, where $L=p_\sigma n^2/\gcd(n,\mu_\sigma)$. 

Let $[0,n-1]=\{0,1,\ldots,n-1\}$, and note that $|[0,n-1]\cap Z|=\mu_\sigma$. Since $L$ is divisible by $n$, it follows from \eqref{eq:t_r_recurrence} that 
\begin{align*}
t_L^*&=\sum_{\substack{0\leq j\leq L-1 \\ v_{j}\in Y_1}}a_j+\sum_{\substack{0\leq j\leq L-1 \\ v_{j}\in Y_{-1}}}(n-1-a_j) \\ 
&=\sum_{k\in[0,n-1]\cap Z}\sum_{\beta=0}^{L/n-1}a_{\beta n+k}+\sum_{k\in[0,n-1]\setminus Z}\sum_{\beta=0}^{L/n-1}(n-1-a_{\beta n+k}) \\ 
&=\sum_{k\in[0,n-1]\cap Z}\sum_{\beta=0}^{L/n-1}a_{\beta+k}+\sum_{k\in[0,n-1]\setminus Z}\sum_{\beta=0}^{L/n-1}(n-1-a_{\beta+k}) \\ 
&=\sum_{k\in[0,n-1]\cap Z}\sum_{\beta=0}^{L/n-1}a_{\beta+k}-\sum_{k\in[0,n-1]\setminus Z}\sum_{\beta=0}^{L/n-1}a_{\beta+k}+(n-\mu_\sigma)(L/n)(n-1)
\end{align*} 
where we have used the fact that $a_{\beta n+k}=a_{\beta+k}$. Since $L/n$ is divisible by $p_\sigma$, we can employ \eqref{eq:sum_of_as} to find that 
$\sum_{\beta=0}^{L/n-1}a_{\beta+k}=m_\sigma L/n$ for every nonnegative integer $k$. Consequently, 
\begin{align*}
t_L^*&=\mu_\sigma m_\sigma L/n-(n-\mu_\sigma)m_\sigma L/n+(n-\mu_\sigma)(L/n)(n-1) \\ 
&=\frac{np_\sigma}{\gcd(n,\mu_\sigma)}(\mu_\sigma m_\sigma+(n-\mu_\sigma)(n-1-m_\sigma)). 
\end{align*} 
This proves \cref{thm:cycle}. 

Let us now assume that $n$ is even and that all edges of $G$ are refraction edges; we aim to prove \cref{cor:sieving}. For $i\in\Z/n\Z$ and $\epsilon\in\{\pm 1\}$, define $\boldsymbol{\omega}_{i,\epsilon}\colon\Z/n\Z\to\Z/n\Z$ by ${\boldsymbol{\omega}_{i,\epsilon}(j)=\epsilon(j-i)+1}$. Define a map $\varphi\colon\Xi_G\to\Lambda_G\times\{1\}\times\{1\}$ by $\varphi(\rho,i,\epsilon)=(\boldsymbol{\omega}_{i,\epsilon}\circ\rho,1,1)$. The orbit of $\Theta$ containing $(\rho,i,\epsilon)$ has the same size as the orbit of $\Theta$ containing $\varphi(\rho,i,\epsilon)$. For every integer $\kappa$, since each element of $\Lambda_G\times\{1\}\times\{1\}$ has $2n$ preimages under $\varphi$, we have  
\begin{equation}\label{eq:bold_omega}
|\{(\rho,i,\epsilon)\in\Xi_G:\Theta^\kappa(\rho,i,\epsilon)=(\rho,i,\epsilon)\}|=2n|\{\sigma\in\Lambda_G:\Theta^\kappa(\sigma,1,1)=(\sigma,1,1)\}|. 
\end{equation}
 
For any $\sigma\in\Lambda_G$, we have $\mu_\sigma=n/2$, so the formula in \cref{thm:cycle} for the size of the orbit of $\Theta$ containing $(\sigma,1,1)$ simplifies to $p_\sigma n(n-1)$. If $n=4$, then $p_\sigma=1$ for all $\sigma$, so the order of $\Theta$ is $12$. If $n>4$, then it is straightforward to find labelings $\sigma$ such that $p_\sigma={n-1}$, so the order of $\Theta$ is $n(n-1)^2$. To prove \cref{cor:sieving}, we must show that the number of elements of $\Xi_G$ fixed by $\Theta^{n(n-1)k}$ is $2n^2(n-1)\sum_{\lambda\vdash n-1}f^\lambda_{n-1\mid\maj}f^\lambda(e^{2\pi ik/(n-1)})$. According to \eqref{eq:bold_omega}, it suffices to prove that the number of labelings ${\sigma\in\Lambda_G}$ such that $\Theta^{kn(n-1)}(\sigma,1,1)=(\sigma,1,1)$ is $n(n-1)\sum_{\lambda\vdash n-1}f^\lambda_{n-1\mid\maj}f^\lambda(e^{2\pi ik/(n-1)})$. 

The number of labelings $\sigma\in\Lambda_G$ satisfying $\Theta^{kn(n-1)}(\sigma,1,1)=(\sigma,1,1)$ is $n|\Gamma|$, where \[\Gamma=\{\sigma\in\Lambda_G:\sigma(v_n)=1\text{ and }p_\sigma\text{ divides }k\}.\] Upon inspecting the definition of $p_\sigma$, we find that 
\begin{equation}\label{eq:Gamma_after_Vic}
|\Gamma|=|\{\xi\in \mathfrak S_{n-1}:c^k\xi c^j=\xi\text{ for some integer }j\in[0,n-2]\}|,
\end{equation} 
where $c$ is the cycle $(1\,\, 2\,\,\cdots\,\, n-1)$ in $\mathfrak S_{n-1}$. Barcelo, Reiner, and Stanton proved (see \cite[Theorem~1.4]{BRS}) that for \emph{fixed} $j$, the number of $\xi\in \mathfrak S_{n-1}$ satisfying $c^k\xi c^j$ is \[\sum_{\lambda\vdash n-1}f^\lambda(e^{2\pi ij/(n-1)})f^\lambda(e^{2\pi ik/(n-1)})\] (their result is actually much more general). Therefore, we can use \eqref{eq:plug_in_roots} to find that
\begin{align*}
n|\Gamma|&=
n\sum_{j=0}^{n-2}\sum_{\lambda\vdash n-1}f^\lambda(e^{2\pi ij/(n-1)})f^\lambda(e^{2\pi ik/(n-1)}) \\ 
&= n\sum_{\lambda\vdash n-1}f^\lambda(e^{2\pi ik/(n-1)})\sum_{j=0}^{n-2}f^\lambda(e^{2\pi ij/(n-1)}) \\ 
&= n(n-1)\sum_{\lambda\vdash n-1}f^\lambda_{n-1\mid\maj}f^\lambda(e^{2\pi ik/(n-1)}), 
\end{align*}
as desired. 

\begin{remark}
Vic Reiner has suggested to us the following alternative approach to proving that the quantity on the right-hand side of \eqref{eq:Gamma_after_Vic} is equal to $(n-1)\sum_{\lambda\vdash n-1}f^\lambda_{n-1\mid\maj}f^\lambda(e^{2\pi i k/(n-1)})$. Given a graded ring $U$, let $\mathrm{Hilb}(U,q)$ denote the Hilbert series of $U$ in the variable $q$. Let $\mathfrak S_{n-1}$ act on the polynomial ring $R=\mathbb C[x_1,\ldots,x_{n-1}]$ by permuting variables. For a subgroup $W$ of $\mathfrak S_{n-1}$, let $R^W$ denote the $W$-invariant subalgebra of $R$, and let $(R^{W}_+)$ be the ideal of $R$ generated by elements of $R^W$ of positive degree. Using Springer's theorem on regular elements \cite[Proposition~4.5]{Springer}, one can show that 
\[\sum_{\lambda\vdash n-1}f^{\lambda}_{n-1\mid \maj}f^\lambda(q)=\mathrm{Hilb}( R^{\mathcal C_{n-1}},q)/\mathrm{Hilb}(R^{\mathfrak S_{n-1}},q)=\mathrm{Hilb}(R^{\mathcal C_{n-1}}/(R^{\mathfrak S_{n-1}}_+),q).\] 
The desired result then follows from \cite[Theorem~8.2]{CSP}. 
\end{remark}

\section{Other Directions}
\label{sec:conclusion} 

In this section, we collect several suggestions for future work. 

\subsection{Other Graphs} 
The most natural open problem is to understand the orbit structure of toric promotion with reflections and refractions for other choices of the graph $G=(V,E)$ and the partition $E=\Eflect\sqcup\Efract$. Since \cref{thm:cycle} handled the case in which $G$ is a cycle and $|\Efract|$ is even, one particularly natural setting to consider is that in which $G$ is a cycle and $|\Efract|$ is odd. 

\subsection{Contractible Billiards Trajectories} 
As mentioned in \cref{subsec:remarks}, one can view the combinatorial billiards trajectories determined by the map $\Theta$ as closed loops in the torus $\mathbb T_{n-1}$. It would be very interesting to answer the following. 

\begin{question}\label{quest:contractible}
For which choices of the graph $G=(V,E)$ and the partition $E=\Eflect\sqcup\Efract$ are all of the billiards trajectories determined by $\Theta$ contractible? For which choices are none of the billiards trajectories determined by $\Theta$ contractible? 
\end{question}

Defant and Liu have made some partial progress toward \cref{quest:contractible} \cite{DefantLiu}.

\subsection{Other Weyl Groups}\label{subsec:Weyl} 
If would be interesting to investigate analogues of our work in other affine Weyl groups. In this setting, it is natural to allow the beam of light to travel in the direction of a coroot vector. One should also designate the hyerplanes in the affine Coxeter arrangement to be windows, mirrors, and metalenses in such a way that parallel hyperplanes are made of the same material; this ensures that one can obtain a well-defined combinatorial dynamical system by projecting to the associated finite Weyl group. Even when working with the affine symmetric group, one could consider variants of our setting by choosing to shine the beam of light in the direction of a different coroot vector.

\subsection{Combinatorial Refraction Billiards in a Polygon} 

The article \cite{DefantJiradilok} considers combinatorial billiards systems that are confined to a polygon in the triangular grid. It could be interesting to introduce refractions into such systems. 

\subsection{Directed Graphs and Hyperplanes of Two Materials}

The \emph{Bender--Knuth billiards systems} in \cite{BDHKL} are defined using windows and \emph{one-way mirrors}. A \emph{one-way mirror} is a hyperplane that allows light to pass directly through it from one direction but reflects light from the other direction. More generally, one could consider combinatorial billiards systems in which each hyperplane is made of two materials (which could be the same material). That is, the light will pass directly through, reflect, or refract depending on the side of the hyperplane that it hits. 

One could also consider toric combinatorial billiards systems in which each toric hyperplane in made of two materials. In our toric setting, one could replace $G$ with a directed graph and designate each directed edge as either a reflection edge or a refraction edge. In this case, we would modify the definition of $\Theta(\sigma,i,\epsilon)$ in \eqref{eq:Theta} so that the three cases depend on the ordered pair $(\sigma^{-1}(i),\sigma^{-1}(i+1))$ instead of the unordered pair $\{\sigma^{-1}(i),\sigma^{-1}(i+1)\}$. When $G$ is a tournament on $[n]$ whose directed edges are all reflection edges, this setup is closely related to a version of the Totally Asymmetric Simple Exclusion Process (TASEP) on a cycle (see \cite{Lam, LamWilliams}). 

\section*{Acknowledgments}
Ashleigh Adams was supported by NSF grant DMS-2247089. Colin Defant was supported by the National Science Foundation under Award No.\ 2201907 and by a Benjamin Peirce Fellowship at Harvard University. Jessica Striker was supported by a Simons Foundation gift MP-TSM-00002802 and NSF grant DMS-2247089. We thank Vic Reiner and Ilaria Seidel for very helpful conversations. We thank Pavel Galashin for suggesting the problem of considering the homology of toric combinatorial billiards trajectories. 

\section*{Data Availability Statement} 
This article does not have any associated data. 

\section*{Conflict of Interest Statement} 
The authors have no conflicts of interest to report.


\begin{thebibliography}{99}


\bibitem{Baird}
P. Baird-Smith, D. Davis, E. Fromm, and S. Iyer, Tiling billards on triangle tilings, and interval exchange transformations. \emph{J. Lond. Math. Soc.}, {\bf 109} (2024). 

\bibitem{NathanCayley}
E. Banaian, A. T. N. Hoang, E. Kelley, W. Miller, J. Stack, C. Stephen, and N. Williams, An elaborate new proof of Cayley's formula. \emph{Algebr. Comb.}, {\bf 8} (2025), 971--995.  

\bibitem{BRS}
H. Barcelo, V. Reiner, and D. Stanton, Bimahonian distributions. \emph{J. Lond. Math. Soc.}, {\bf 77} (2008), 627--646. 

\bibitem{Barkley}
G. Barkley and R. Liu, Channels, billiards, and perfect matching $2$-divisibility. \emph{Electron. J. Combin.}, {\bf 28} (2021).  

\bibitem{BDHKL}
G. Barkley, C. Defant, E. Hodges, N. Kravitz, and M. Lee, Bender--Knuth billiards in Coxeter groups. \emph{Forum Math.\ Sigma}, {\bf 13} (2025). 

\bibitem{Barutello} 
V. L. Barutello, I. De Blasi, and S. Terracini, Chaotic dynamics in refraction galactic billiards. \emph{Nonlinearity}, {\bf 36} (2023). 

\bibitem{BSV}
J. Bernstein, J. Striker, and C. Vorland, $P$-strict promotion and $B$-bounded rowmotion, with applications to tableaux of many flavors. \emph{Comb. Theory}, {\bf 1} (2021).  


\bibitem{Davis1}
D. Davis, K. DiPietro, J. Rustad, and A. St. Laurent, Negative refraction and tiling billiards. \emph{Adv. Geom.}, {\bf 18} (2018), 133--159. 

\bibitem{Davis2}
D. Davis and W. P. Hooper, Periodicity and ergodicity in the trihexagonal tiling. \emph{Comment. Math. Helv.}, {\bf 93} (2018), 661--707. 

\bibitem{DeBlasi2} 
I. De Blasi and S. Terracini, Refraction periodic trajectories in central mass galaxies. \emph{Nonlinear Anal.}, {\bf 218} (2022). 

\bibitem{DeBlasi1}
I. De Blasi and S. Terracini, On some refraction billiards. \emph{Discrete Contin. Dyn. Syst.}, {\bf 43} (2023), 1269--1318. 

\bibitem{DefantToric}
C. Defant, Toric promotion.\ \emph{Proc. Amer. Math. Soc.}, {\bf 150} (2022). 

\bibitem{DHPP}
C. Defant, S. Hopkins, S. Poznanovi\'c, and J. Propp, Homomesy via toggleability statistics. \emph{Comb Theory}, {\bf 3} (2023). 

\bibitem{DefantJiradilok}
C. Defant and P. Jiradilok, Triangular-grid billiards and plabic graphs. \emph{Comb. Theory}, {\bf 3} (2023). 

\bibitem{DefantLiu}
C. Defant and D. Liu, Homology in combinatorial refraction billiards.\ \emph{Res.\ Math.\ Sci.}, {\bf 12} (2025).  

\bibitem{DefantPermutoric}
C. Defant, R. Madhukara, and H. Thomas, Permutoric promotion: gliding globs, sliding stones, and colliding coins. \emph{Comb.\ Theory}, {\bf 4} (2024). 

\bibitem{Semidistrim}
C. Defant and N. Williams, Semidistrim lattices. \emph{Forum Math. Sigma}, {\bf 11} (2023).  

\bibitem{Ehrenborg}
R. Ehrenborg, M. Readdy, and M. L. E. Slone, Affine and toric hyperplane arrangements. \emph{Discrete Comput. Geom.}, {\bf 41} (2009), 481--512. 

\bibitem{GPPSS}
C. Gaetz, O. Pechenik, S. Pfannerer, J. Striker, and J. P. Swanson, Rotation-invariant web bases from hourglass plabic graphs. To appear in \emph{Invent. Math.} 

\bibitem{Glendinning}
P. Glendinning, Geometry of refractions and reflections through a biperiodic medium. \emph{SIAM J. Appl. Math.}, {\bf 76} (2016). 

\bibitem{Haiman}
M. D. Haiman, Dual equivalence with applications, including a conjecture of Proctor. \emph{Discrete Math.}, {\bf 99} (1992), 79--113.

\bibitem{HopkinsRubey}
S. Hopkins and M. Rubey, Promotion of Kreweras words. \emph{Selecta Math.}, {\bf 28} (2022). 

\bibitem{Jay} 
M. Jay, Deviations for generalized tiling billiards in cyclic polygons. \emph{Int. Math. Res. Not.}, {\bf 2025} (2025). 

\bibitem{Lam}
T. Lam, The shape of a random affine Weyl group element and random core partitions. \emph{Ann. Probab.}, {\bf 43} (2015), 1643--1662. 

\bibitem{LamWilliams}
T. Lam and L. Williams, A Markov chain on the symmetric group that is Schubert positive? \emph{Exp. Math.}, {\bf 21} (2012), 189--192. 

\bibitem{McMullen}
C. McMullen, Coxeter groups, Salem numbers and the Hilbert metric. \emph{Publ. Math. Inst. Hautes Etudes Sci.}, {\bf 95} (2002), 151--183. 

\bibitem{Paris}
O. Paris-Romaskevich, Tiling billiards and Dynnikov’s helicoid. \emph{Trans. Moscow Math. Soc.}, {\bf 2021} (2021), 133--147. 

\bibitem{ProppRoby}
J. Propp and T. Roby, Homomesy in products of two chains.\ \emph{Electron. J. Combin.}, {\bf 22} (2015). 

\bibitem{Rhoades}
B. Rhoades, Cyclic sieving, promotion, and representation theory. \emph{J. Combin. Theory Ser. A}, {\bf 117} (2010), 38--76. 

\bibitem{CSP}
V. Reiner, D. Stanton, and D. White, The cyclic sieving phenomenon. \emph{J. Combin. Theory Ser. A}, {\bf 108} (2004), 17--50. 

\bibitem{Schutzenberger1}
M.-P. Sch\"utzenberger, Quelques remarques sur une construction de Schensted. \emph{Canad. J. Math.}, {\bf 13} (1961), 117--128. 

\bibitem{Schutzenberger2}
M.-P. Sch\"utzenberger, Promotion des morphismes d'ensembles ordonn\'es. \emph{Discrete Math.}, {\bf 2} (1972), 73--94. 

\bibitem{meta}
D. Smith, J. Pendry, and M. Wiltshire, Metamaterials and negative refractive index. \emph{Science}, {\bf 305} (2004), 788--792. 

\bibitem{Springer}
T. A. Springer, Regular elements of finite reflection groups. \emph{Invent. Math.}, {\bf 25} (1974), 159--198.

\bibitem{EC2}
R. P. Stanley, Enumerative combinatorics, vol. 2. Cambridge University Press, 1999. 

\bibitem{StanleyPromotion}
R. P. Stanley, Promotion and evacuation.\ \emph{Electron. J. Combin}, {\bf 16} (2009). 

\bibitem{StrikerSurvey}
J. Striker, Dynamical algebraic combinatorics: promotion, rowmotion, and resonance. \emph{Notices Amer. Math. Soc.}, {\bf 64} (2017), 543--549. 

\bibitem{StrikerWilliams}
J. Striker and N. Williams, Promotion and rowmotion. \emph{European J. Combin.}, {\bf 33} (2012), 1919--1942. 

\bibitem{ThomasWilliams}
H. Thomas and N. Williams, Rowmotion in slow motion.\
\emph{Proc. Lond. Math. Soc.}, {\bf 119} (2019), 1149--1178. 

\bibitem{Zhu}
H. Zhu, The maximum number of cycles in a triangular-grid billiards system with a given perimeter. \emph{Preprint:} arXiv:2309.00100. 
\end{thebibliography}
\end{document}